\newtheorem{lemma}{Lemma}
\newtheorem{theorem}{Theorem}
\newtheorem{corollary}{Corollary}
\newtheorem*{definition*}{Definition}
\newtheorem{remark}{Remark}
\numberwithin{equation}{section}
\numberwithin{table}{section}
\numberwithin{figure}{section}
\def \R{{\mathbb R}}
\def \C{{\mathbb C}}
\def\real{\mathop{\mathrm{Re}}}
\newcommand {\mat}  [1] {\left[\begin{array}{#1}}
\newcommand {\rix}      {\end{array}\right]}
\def\real{\mathop{\mathrm{Re}}}
\title{
Minimal-norm static feedbacks using dissipative Hamiltonian matrices
}
\author{Nicolas Gillis\thanks{Department of Mathematics and Operational Research, University of Mons, Rue de Houdain 9, 7000 Mons, Belgium. Email: nicolas.gillis@umons.ac.be. N.~Gillis acknowledges the support by the Fonds de la Recherche Scientifique - FNRS and the Fonds Wetenschappelijk Onderzoek - Vlanderen (FWO) under EOS Project no O005318F-RG47, and by the European Research Council (ERC starting grant no 679515).}
\qquad Punit Sharma\thanks{Department of Mathematics, Indian Institute of Technology Delhi, Hauz Khas, New Delhi-110016, India.
Email: punit.sharma@maths.iitd.ac.in. P. Sharma acknowledges the support of the DST-Inspire Faculty Award (MI01807-G) by Government of India and Institute SEED Grant (NPN5R) by IIT Delhi.}}
\begin{document}

\maketitle

\begin{abstract}
In this paper, we characterize the set of static-state feedbacks that stabilize a given
continuous linear-time invariant system pair using dissipative Hamiltonian matrices. This characterization results
in a parametrization of feedbacks in terms of skew-symmetric and symmetric positive semidefinite matrices, and leads to a semidefinite program that computes a static-state stabilizing feedback. 
This characterization also allows us to propose an algorithm that computes minimal-norm static feedbacks. 
 The theoretical results extend to the static-output feedback (SOF) problem, and we also propose an algorithm to tackle this problem. 
 We illustrate the effectiveness of our algorithm compared to state-of-the-art methods for the SOF problem on numerous numerical examples from the COMPLeIB library. 
\end{abstract}

\textbf{Keywords.} 
dissipative Hamiltonian system, 
static-state feedback, 
static-output feedback, 
semidefinite optimization

\section{Introduction}\label{sec:intero}

Consider a continuous linear-time invariant (LTI) system in the form
\begin{eqnarray*}
\dot{x}(t)&=&A x(t)+Bu(t), \\
y(t)&=&Cx(t),
\end{eqnarray*}
where, for all $t\in \R$, $x(t) \in \R^n$ is the state space, $u(t) \in \R^{m}$
is the control input, $A\in \R^{n,n}$, $B\in \R^{n,m}$, and $C\in \R^{p,n}$.
The notion of stabilizing the system pair $(A,B)$
using feedback controllers is a fundamental one, and is referred to as the static-state feedback (SSF) problem. 
It requires to find $K \in \R^{m,n}$ such that
$A-BK$ is stable, that is, all eigenvalues of the matrix $A-BK$ are in the left half of the complex plane and those on the imaginary axis are semisimple; see for example~\cite{AstM10,AndBJ75}. 
The first goal of this paper is to solve the SSF problem; this can be divided into two parts: 
\begin{enumerate}
\item Feasibility. 
Check the existence of a feedback matrix $K$ such that $A-BK$ is stable.

\item Optimization. 
If the problem is feasible, minimize the  norm of the feedback matrix, that is,  solve
\[
\inf_{K} \, {\|K\|} \quad \text{ such that } \quad A-BK \text{ is stable},
\]
where ${\|\cdot\|}$ is a given norm such as the $\ell_2$ norm or the Frobenius norm. 
\end{enumerate}

The second goal is to consider the analogous problem 
for system triplets $(A,B,C)$, referred to as the static-output feedback (SOF) problem. The SOF problem requires to find $K \in \R^{m,p}$ such that
$A-BKC$ is stable; see~\cite{SyrADG97} for a survey on the SOF problem. This decision problem is believed to be NP-hard as no polynomial-time algorithm is known. 
Moreover, if extra constraints are imposed on the entries of the static controller, then this decision problem is NP-hard~\cite{Nem93,BloT97}. As a consequence, the minimal-norm SOF problem, for which the norm of $K$ is minimized, is hard as well. 
For more discussion on the hardness of this problem, we refer to the recent paper by Peretz~\cite{Per16} and the references therein. 
The solution of the SOF problem is important for systems which models
structural dynamics, and naturally needs a static feedback that can be built into the structure~\cite{SpeS97,XuT02,YanLJ03,PolKS03,Per16}.
It was shown that optimal SOFs 
may achieve similar performance as optimal dynamic feedbacks. 

\subsection{Contribution and outline of the paper}

Recently, in~\cite{GilS17}, a parametrization of the set of all stable matrices was obtained in terms of dissipative Hamiltonian (DH) systems.
DH systems are special cases of port-Hamiltonian systems, which recently have received a lot attention in energy based modeling; see for example \cite{GolSBM03,Sch06,SchM13}, and also  \cite{GilS17b,BeaMV18,MehV19} for robustness analysis. 
A matrix
$A \in \R^{n,n}$ is called a \emph{DH matrix} if $A=(J-R)Q$ for some $J, R, Q \in \R^{n,n}$ such that
$J^T=-J$, $R$ is positive semidefinite and $Q$ is positive definite. A matrix $A$ is stable if and only if it is a DH matrix. This parametrization has been used to solve several nearness problems for LTI systems~\cite{MehMS16,GilMS17,GilS17b,GilKS18a}.  
In this paper, we provide a complete characterization
for the static-state stabilizing feedbacks of a given pair $(A,B)$ in terms of DH matrices. 
This provides an efficient
way to check the feasibility by solving a convex semidefinite program (SDP).
We then propose a sequential SDP (SSDP) method to solve the optimization problem of minimizing the norm of the feedbacks. 
We extend this approach to tackle the SOF problem for the system triplet $(A,B,C)$ and compare our solution with the algorithms proposed in~\cite{H2hifoo} and~\cite{Per16}. 
Although we cannot guarantee to obtain a feasible solution in all cases due to the complexity of the problem, we are able in many cases to get better solutions (that is, provide a feasible solution with smaller norm). 
 
The paper is organized as follows. 
 In Section~\ref{sec:prelim}, 
 we state some preliminary 
results from the literature. 
In Section~\ref{sec:pair}, we give a complete characterization 
of the static-state stabilizing feedbacks for a system pair $(A,B)$ in terms of 
DH matrices. 
In Section~\ref{sec:triplet}, 
we extend these results for the SOF problem of a system triplet $(A,B,C)$. 
In Section~\ref{sec:algorithm}, using our characterizations
based on DH matrices, 
we propose two algorithms 
to minimize the norm of stabilizing feedback matrices: 
one for the SSF problem (Algorithm~\ref{algo:stabpair}) and one for the SOF problem (Algorithm~\ref{algo:stabtriplet}). 
In Section~\ref{sec:numexp},  we illustrate the effectiveness of Algorithm~\ref{algo:stabtriplet} compared to the two state-of-the-art methods for the SOF problem proposed in~\cite{H2hifoo, Per16} on numerous numerical examples 
from the COMPLeIB library~\cite{leibfritz2004compleib}. 
Numerical results for  Algorithm~\ref{algo:stabpair} on the SSF problem are reported in Appendix~\ref{appA}.

\paragraph{Notation}

Throughout the paper, $X^T$ and $X^\dagger$  stand for the transpose and Moore-Penrose pseudoinverse of a real matrix $X$, respectively.
We write $X\succ 0$ and $X\succeq 0$ $(X \preceq 0)$ if $X$ is symmetric and positive definite
or positive semidefinite (symmetric negative semidefinite), respectively.
By $I_m$ we denote the identity matrix of size $m \times m$, and by $\text{null}(X)$ we denote the null space of $X$.
The eigenvalue spectrum of a matrix $X$ is denoted by $\Lambda(X)$.
For a given matrix triplet $(A,B,C)$, where $A\in \R^{n,n}$, $B\in \R^{n,m}$ and $C \in \R^{p,n}$, we define
\begin{equation}
\mathcal K(A,B,C):=\left\{K\in \R^{m,p}~|~A-BKC~\text{is~stable}\right\}.
\end{equation}
and
\begin{equation}
\mathcal W(A,B,C):=\left\{W\in \R^{n,n}~|~A-BB^{\dagger}WC^\dagger C~\text{is~stable}\right\}.
\end{equation}
For a matrix pair $(A,B)$, we use the notation $\mathcal K_R(A,B)$ for $\mathcal K(A,B,I_p)$ and $\mathcal W_R(A,B)$ for $\mathcal W(A,B,I_p)$. For the matrix triplet $(A,I_m,C)$, we use the notation $\mathcal K_L(A,C)$ for $\mathcal K(A,I_m,C)$ and $\mathcal W_L(A,C)$ for $\mathcal W(A,I_m,C)$. 


\section{Preliminaries} \label{sec:prelim}

In this section, we present results from the literature that will be useful in the following sections. 

Let us first recall the definition of a DH matrix: 
A matrix $A \in \mathbb R^{n,n}$ is said to be a DH matrix if $A=(J-R)Q$ for some $J,R,Q \in \mathbb R^{n,n}$ such that
$J^T=-J$, $R\succeq 0$ and $Q \succ 0$. The set of stable matrices is characterized as the set of DH matrices in the following.

\begin{theorem}{\rm \cite[Lemma 2]{GilS17}}\label{thm:gilsstab}
Let $A\in \R^{n,n}$. Then $A$ is stable if and only if $A$ is a DH matrix.
\end{theorem}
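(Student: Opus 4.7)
The plan is to reduce the equivalence to a Lyapunov-type inequality: $A$ is stable if and only if there exists $P\succ 0$ with $AP+PA^T\preceq 0$, and the DH condition is essentially a constructive rewriting of this inequality.

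For the easy direction (DH $\Rightarrow$ stable), suppose $A=(J-R)Q$ with $J^T=-J$, $R\succeq 0$, and $Q\succ 0$. Set $P=Q^{-1}$, which is symmetric positive definite. A direct computation gives
\[
AP+PA^T=(J-R)+(J-R)^T=-2R\preceq 0.
\]
This is the standard Lyapunov inequality (in non-strict form), and I would cite or briefly recall the result that any $A$ admitting such a $P$ is stable in the sense of the paper, i.e., all eigenvalues lie in the closed left half plane and those on the imaginary axis are semisimple. The semisimplicity on the imaginary axis follows because if $A$ had a nontrivial Jordan block at $i\omega$, the quadratic form $x^TPx$ would be unbounded on the associated invariant subspace, contradicting $AP+PA^T\preceq 0$.

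For the converse (stable $\Rightarrow$ DH), I would invoke the corresponding (more delicate) Lyapunov result for marginally stable matrices: if $A$ is stable in the above sense, then there exists $P\succ 0$ with $AP+PA^T\preceq 0$. Given such a $P$, let $Q:=P^{-1}\succ 0$ and set $M:=AQ^{-1}=AP$. Decompose $M$ into its skew-symmetric and symmetric parts by
\[
J:=\tfrac{1}{2}(M-M^T),\qquad R:=-\tfrac{1}{2}(M+M^T).
\]
Then $J^T=-J$ by construction, $R\succeq 0$ because $M+M^T=AP+PA^T\preceq 0$, and $M=J-R$, so $A=(J-R)Q$ is in DH form.

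The main obstacle is the converse direction, and specifically ensuring that $P$ can be chosen strictly positive definite when $A$ is only Lyapunov stable (not Hurwitz). For a Hurwitz $A$ this is routine: for any $R\succ 0$ the Lyapunov equation $AP+PA^T=-2R$ has a unique $P\succ 0$. For eigenvalues on the imaginary axis a direct Lyapunov equation argument breaks down, and one has to argue more carefully, for instance by block-diagonalizing $A$ in real Jordan form, choosing $P$ block-diagonally as identity on the semisimple imaginary block and as the Hurwitz Lyapunov solution on the strictly stable block, and then transforming back. This is the step where I would either appeal to a known reference on generalized Lyapunov inequalities or write out the block construction explicitly; everything else is a short algebraic manipulation.
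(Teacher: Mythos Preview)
The paper does not actually prove this theorem; it is stated as a preliminary result quoted from \cite[Lemma~2]{GilS17} without proof, so there is no ``paper's own proof'' to compare against. Your argument is correct and is the standard one: the DH condition is equivalent to the non-strict Lyapunov inequality $AP+PA^T\preceq 0$ for some $P\succ 0$, via $P=Q^{-1}$ and the symmetric/skew-symmetric splitting of $AP$, and this inequality is in turn equivalent to Lyapunov stability of $A$. The only genuinely nontrivial ingredient, which you correctly flag, is the existence of a \emph{strictly} positive definite $P$ when $A$ has semisimple eigenvalues on the imaginary axis; your block-diagonal construction (identity on the imaginary block, Hurwitz Lyapunov solution on the asymptotically stable block, then pull back through the similarity) is exactly what is done in \cite{GilS17} and in the classical references, so your outline is complete modulo writing out that step.
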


For DH matrices, we can easily derive the following lemma for which we provide the proof which will be useful later on.  
\begin{lemma}{\rm \cite[Lemma 3]{GilMS17}}  \label{lem:unitequi}
DH matrices are invariant under orthogonal transformations.
\end{lemma}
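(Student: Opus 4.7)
The plan is to show directly that if $A=(J-R)Q$ is a DH decomposition and $U\in\R^{n,n}$ is orthogonal (so $U^TU = UU^T = I_n$), then $U^TAU$ admits a DH decomposition of the same form, built out of $U^TJU$, $U^TRU$, and $U^TQU$.

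First I would insert the identity $UU^T$ between the factors $(J-R)$ and $Q$ to obtain
\[
U^T A U \;=\; U^T(J-R)Q U \;=\; U^T(J-R)U\,U^T Q U \;=\; \bigl(U^T J U - U^T R U\bigr)\bigl(U^T Q U\bigr).
\]
Then I would verify the three structural properties one by one: (i) $U^TJU$ is skew-symmetric, since $(U^TJU)^T = U^TJ^TU = -U^TJU$; (ii) $U^TRU \succeq 0$, since congruence by a (real) orthogonal matrix preserves symmetric positive semidefiniteness, i.e.\ for every $x\in\R^n$, $x^T(U^TRU)x = (Ux)^T R (Ux) \ge 0$; and similarly (iii) $U^TQU \succ 0$, because $Q\succ 0$ and $U$ is invertible, so $x^T(U^TQU)x = (Ux)^TQ(Ux) > 0$ for $x\ne 0$.

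Combining these three facts with the identity above shows that $U^TAU = (\tilde J - \tilde R)\tilde Q$ with $\tilde J^T = -\tilde J$, $\tilde R\succeq 0$, and $\tilde Q\succ 0$, which is exactly the definition of a DH matrix. There is no real obstacle here; the only subtle point is remembering to insert $UU^T = I_n$ between $(J-R)$ and $Q$ so that the product splits cleanly into the required skew-symmetric-minus-PSD part times a positive-definite part, rather than trying to interpret $U^TAU$ directly as $(J-R)Q$ conjugated, which would not preserve the DH structure.
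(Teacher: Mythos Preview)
Your proof is correct and follows essentially the same approach as the paper: both insert $UU^T=I_n$ between $(J-R)$ and $Q$ to write $U^TAU=(U^TJU-U^TRU)U^TQU$ and then verify that the three conjugated factors retain the required skew-symmetric, positive semidefinite, and positive definite structures. Your version is slightly more explicit in justifying the semidefiniteness via quadratic forms, but the argument is the same.
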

\begin{proof} 
 Let $A$ be a DH matrix, that is, 
$A=(J-R)Q$ for some $J^T=-J$, $R\succeq 0$, and $Q\succ 0$. Let $U$ be orthogonal, that is, $U^TU=I_n=UU^T$. Then
$U^TAU=U^T(J-R)QU=(U^TJU-U^TRU)U^TQU$ is a DH matrix since $(U^TJU)^T=-U^TJU$, $U^TRU \succeq 0$, and $U^TQU \succ 0$.
\end{proof}


The following lemma will be used to parametrize the set of all feedbacks that stabilize a system pair $(A,B)$ in Section~\ref{sec:pair} and system triplet $(A,B,C)$ in Section~\ref{sec:triplet}. 

\begin{lemma}{\rm{\cite[Lemma 1.3]{Sun93}}}\label{lem:Suns_result}
Let $A \in \C^{p , m}$, $B \in \C^{n , q}$, $C \in \C^{p , q}$, and
\[
 \Upsilon=\big\{E\in \C^{m , n}\,\big|\,AEB=C\big\}.
\]
Then $\Upsilon \neq \emptyset $ if and only if $A,B,C$ satisfy
$AA^{\dagger}CB^{\dagger}B=C$. If the latter condition is satisfied then
\[
\Upsilon=\big\{A^{\dagger}CB^{\dagger}+Z-A^{\dagger}AZBB^{\dagger}\,\big|\, Z\in\C^{n, n}\big\},
\]
and
\[
\min_{E\in \Upsilon}{\|E\|}_F={\|A^{\dagger}CB^{\dagger}\|}_F,\quad
\min_{E\in \Upsilon}{\|E\|}_2={\|A^{\dagger}CB^{\dagger}\|}_2.
\]
\end{lemma}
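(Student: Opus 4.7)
The plan is to handle the three assertions separately, using only the defining Moore--Penrose identities $AA^\dagger A=A$ and $BB^\dagger B=B$, together with the fact that $AA^\dagger$, $A^\dagger A$, $BB^\dagger$, $B^\dagger B$ are orthogonal projectors.

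For the existence criterion, the forward direction is a one-line substitution: if $AEB=C$, then left-multiplying by $AA^\dagger$, right-multiplying by $B^\dagger B$, and collapsing via $AA^\dagger A=A$ and $BB^\dagger B=B$ yields $AA^\dagger CB^\dagger B=AEB=C$. The reverse direction is witnessed by the particular solution $E_0:=A^\dagger CB^\dagger$, which automatically satisfies $AE_0B=AA^\dagger CB^\dagger B=C$ under the stated compatibility condition.

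For the parametrization, I would first check by direct substitution that every $E$ of the proposed form lies in $\Upsilon$: the $Z$ and $A^\dagger A Z B B^\dagger$ terms cancel after applying the two collapsing identities above. Conversely, for an arbitrary $E\in\Upsilon$ I set $Z:=E-A^\dagger CB^\dagger$ and verify that $A^\dagger AZBB^\dagger=0$, which follows from $A^\dagger A E BB^\dagger=A^\dagger(AEB)B^\dagger=A^\dagger CB^\dagger$ (and the same identity applied to $E_0$). Then $E=A^\dagger CB^\dagger+Z-A^\dagger AZBB^\dagger$, as desired.

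For the minimum-norm assertion I would treat the two norms separately. In the Frobenius case, set $P_1:=A^\dagger A$ and $P_2:=BB^\dagger$, and decompose any $E$ into the four pieces $P_1EP_2$, $(I-P_1)EP_2$, $P_1E(I-P_2)$, $(I-P_1)E(I-P_2)$, which are pairwise orthogonal with respect to the Frobenius inner product since $P_1,P_2$ are orthogonal projectors. When $AEB=C$, the first piece is forced to equal $A^\dagger AEBB^\dagger=A^\dagger CB^\dagger$, while the other three are unconstrained; the Pythagorean identity then gives $\|E\|_F\geq\|A^\dagger CB^\dagger\|_F$, with equality at $E_0$. In the spectral case, submultiplicativity together with $\|A^\dagger A\|_2=\|BB^\dagger\|_2=1$ gives the cleaner bound $\|A^\dagger CB^\dagger\|_2=\|A^\dagger A\,E\,BB^\dagger\|_2\leq\|E\|_2$, again attained at $E_0$. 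The only mildly delicate step is identifying the correct four-block Frobenius decomposition; the remainder is a sequence of routine pseudoinverse manipulations, so I do not anticipate any substantial obstacle.
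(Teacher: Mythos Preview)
The paper does not actually prove this lemma; it merely quotes it from \cite{Sun93} as a preliminary result, so there is no ``paper's own proof'' to compare against. Your argument is correct and complete: the existence criterion and the parametrization follow from the Moore--Penrose identities exactly as you describe, and both norm minimalities follow from the fact that $A^\dagger A\,E\,BB^\dagger=A^\dagger CB^\dagger$ is forced for every $E\in\Upsilon$ (via the Pythagorean decomposition for the Frobenius norm and submultiplicativity plus $\|A^\dagger A\|_2,\|BB^\dagger\|_2\le 1$ for the spectral norm). One cosmetic point: the statement as printed has $Z\in\C^{n,n}$, but the dimensions force $Z\in\C^{m,n}$; your choice $Z=E-A^\dagger CB^\dagger$ implicitly uses the correct dimensions, so this is a typo in the paper rather than a gap in your proof.
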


The following well-known lemma gives an equivalent characterization for a positive semidefinite
matrix; it will be used in establishing new conditions for the existence of stabilizing feedbacks. 

\begin{lemma}{\rm \cite{Alb69} }\label{lem:psd_character}
Let the integer $s$ be such that $0<s<n$, and $R=R^T \in \R^{n,n}$ be partitioned as
$R=\mat{cc}B & C^T\\C & D\rix$ with $B\in \R^{s,s}$, $C\in \R^{n-s,s}$ and $D \in \R^{n-s,n-s}$. Then $R \succeq 0$ if and only if
\[
i)~D \succeq 0, \quad ii)~\operatorname{null}(D) \subseteq \operatorname{null}(C^T),\quad {\rm and} \quad iii)~ A-C^TD^{\dagger}C \succeq 0.
\]
\end{lemma}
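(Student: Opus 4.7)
The plan is to prove the two directions using a Schur-complement style decomposition, treating the pseudoinverse carefully so that condition ii) does the work of making $DD^\dagger$ act as the identity on the range of $C$.

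For the forward direction, assume $R\succeq 0$. Condition i) is immediate: $D$ is a principal submatrix of $R$, so testing against vectors of the form $(0,v)^T$ gives $v^TDv\geq 0$. For condition ii), take $v\in\operatorname{null}(D)$ and consider $w=(tu,v)^T$ for arbitrary $u\in\R^s$ and $t\in\R$. A direct expansion gives
\[
w^T R w \;=\; t^2\, u^T B u + 2t\, u^T C^T v + v^T D v \;=\; t^2\, u^T B u + 2t\, u^T C^T v \;\geq\; 0.
\]
If $C^Tv\neq 0$, picking $u=C^Tv$ and letting $t$ be small with sign opposite to $u^TBu$ produces a strictly negative value (the linear term dominates), a contradiction. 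Hence $C^Tv=0$. For condition iii), use that ii) is equivalent to $\operatorname{range}(C)\subseteq \operatorname{range}(D)$, so $DD^\dagger C=C$. Then for any $u\in\R^s$ plug $w=(u,-D^\dagger Cu)^T$ into $w^TRw$; after using $D^\dagger DD^\dagger=D^\dagger$ and the identity just mentioned, the cross terms collapse and
\[
w^T R w \;=\; u^T\!\left(B - C^T D^\dagger C\right)u \;\geq\; 0.
\]

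For the reverse direction, the plan is to exhibit the congruence
\[
R \;=\; M^T\,\widetilde R\, M, \qquad M=\begin{bmatrix} I & 0 \\ D^\dagger C & I \end{bmatrix},\qquad
\widetilde R=\begin{bmatrix} B-C^T D^\dagger C & 0 \\ 0 & D \end{bmatrix}.
\]
Multiplying out, the off-diagonal blocks of $M^T\widetilde R M$ become $C^T D^\dagger D$ and $DD^\dagger C$, and the $(1,1)$ block becomes $B - C^T D^\dagger C + C^T D^\dagger D D^\dagger C$. Using $D^\dagger DD^\dagger=D^\dagger$ and invoking ii) in the form $DD^\dagger C=C$ (equivalently $C^TD^\dagger D=C^T$, by symmetry of $D$ and $D^\dagger$), all of these simplify back to $B$, $C^T$, $C$ respectively, confirming the factorization. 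Since conditions i) and iii) make $\widetilde R$ block-diagonal with PSD blocks, $\widetilde R\succeq 0$ and hence $R=M^T\widetilde R M\succeq 0$.

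The only subtle point, and the main thing to be careful about, is that Schur-complement arguments usually assume $D$ invertible; replacing $D^{-1}$ by $D^\dagger$ is only harmless once one knows $\operatorname{range}(C)\subseteq\operatorname{range}(D)$, which is precisely condition ii). Both directions of the proof therefore hinge on using ii) to justify $DD^\dagger C=C$ whenever it appears, after which the calculations are routine.
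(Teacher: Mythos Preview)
The paper does not give its own proof of this lemma: it is stated as a preliminary result and attributed to \cite{Alb69}. So there is nothing to compare against beyond noting that your argument is the standard generalized Schur complement proof (essentially Albert's original one), and it is correct.

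One small wording issue in your proof of ii): you write ``letting $t$ be small with sign opposite to $u^TBu$'', but the relevant sign is that of the linear coefficient $u^TC^Tv=|C^Tv|^2>0$; since $u^TBu\geq 0$ (as $B$ is a principal submatrix of $R\succeq 0$), you simply need $t<0$ small enough that the linear term dominates. This is cosmetic and does not affect the validity of the argument.
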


\section{DH characterization of static-state stabilizing feedbacks} \label{sec:pair}



Let us denote the set of triplets $(J,R,Q)$ that form a DH matrix as follows 
\begin{equation}
\mathbb D\mathbb H_{\succ}^n:=\left\{(J,R,Q)\in (\R^{n,n})^3~|~J^T=-J,~R\succeq 0,~Q\succ 0 \right\}. 
\end{equation}
For a triplet $(J,R,Q) \in (\R^{n,n})^3$, let us also define
\begin{equation}
g(J,R,Q):=B^{\dagger}(A-(J-R)Q).
\end{equation} 
Using Lemma~\ref{lem:Suns_result}, we have the following characterization of the set $\mathcal K_{R}(A,B)$
in terms of triplets $(J,R,Q) \in \mathbb D\mathbb H_\succ^n$.

\begin{theorem}\label{thm:completechar}
Let $A\in \R^{n,n}$ and $B\in \R^{n,m}$.  
Then, 
\begin{equation}\label{eq:equichar}
\mathcal K_R(A,B)=\left \{g(J,R,Q)-(I_m\text{$-$}B^{\dagger}B)Y 
\Big| (J,R,Q) \in \mathbb D\mathbb H_{\succ}^n,(I_n\text{$-$}BB^\dagger)(A\text{$-$}(J\text{$-$}R)Q)\text{$=$}0, Y\in \R^{m,n} \right \}.
\end{equation}
\end{theorem}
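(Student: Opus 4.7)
The plan is to combine the DH characterization of stability (Theorem~\ref{thm:gilsstab}) with Sun's parametrization of solutions to a linear matrix equation (Lemma~\ref{lem:Suns_result}). Since the DH parametrization converts stability of $A-BK$ into the algebraic identity $A-BK=(J-R)Q$, what remains is simply to invert this identity for $K$ in terms of $(J,R,Q)$, and Sun's lemma does exactly that.

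First, by Theorem~\ref{thm:gilsstab}, a matrix $K\in\R^{m,n}$ belongs to $\mathcal K_R(A,B)$ if and only if $A-BK$ is a DH matrix, that is, if and only if there exists a triplet $(J,R,Q)\in\mathbb D\mathbb H_\succ^n$ with $A-BK=(J-R)Q$, equivalently, $BK=A-(J-R)Q$. Hence $\mathcal K_R(A,B)$ is the union, over all $(J,R,Q)\in\mathbb D\mathbb H_\succ^n$, of the solution set in $K$ of the linear matrix equation $B\,K\,I_n = A-(J-R)Q$.

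I would then apply Lemma~\ref{lem:Suns_result} to this equation with the identifications (in the notation of that lemma) $A\leftarrow B$, $B\leftarrow I_n$, $C\leftarrow A-(J-R)Q$, and $E\leftarrow K$. The solvability condition $AA^{\dagger}CB^{\dagger}B=C$ becomes $BB^{\dagger}(A-(J-R)Q)=A-(J-R)Q$, which is precisely the constraint
\[
(I_n-BB^{\dagger})(A-(J-R)Q)=0
\]
appearing in~\eqref{eq:equichar}. Whenever this feasibility condition holds, Sun's lemma yields the solution set
\[
\bigl\{B^{\dagger}(A-(J-R)Q)+Z-B^{\dagger}B\,Z \,\big|\, Z\in\R^{m,n}\bigr\}
=\bigl\{g(J,R,Q)+(I_m-B^{\dagger}B)Z \,\big|\, Z\in\R^{m,n}\bigr\},
\]
where I used $g(J,R,Q)=B^{\dagger}(A-(J-R)Q)$. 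Setting $Y=-Z$ gives exactly the parametrization on the right-hand side of~\eqref{eq:equichar}.

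Taking the union over all triples $(J,R,Q)\in\mathbb D\mathbb H_\succ^n$ that satisfy the feasibility constraint establishes the claimed equality. I do not expect any serious obstacle: the proof is essentially a one-line composition of Theorem~\ref{thm:gilsstab} and Lemma~\ref{lem:Suns_result}, the only minor bookkeeping being the sign change $Y=-Z$ and the verification that the constraint $(I_n-BB^{\dagger})(A-(J-R)Q)=0$ is just the range condition $\operatorname{range}(A-(J-R)Q)\subseteq\operatorname{range}(B)$, which is immediate from the fact that $BB^{\dagger}$ is the orthogonal projector onto $\operatorname{range}(B)$.
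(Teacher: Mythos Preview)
Your proposal is correct and follows essentially the same approach as the paper: both combine Theorem~\ref{thm:gilsstab} to turn stability into the equation $BK=A-(J-R)Q$ and then invoke Lemma~\ref{lem:Suns_result} to parametrize its solutions, yielding exactly the feasibility condition $(I_n-BB^\dagger)(A-(J-R)Q)=0$ and the affine description of $K$. The only cosmetic difference is that the paper verifies the two inclusions separately (and checks $\supseteq$ by direct substitution rather than citing Sun's lemma), whereas you package both directions into a single application of Lemma~\ref{lem:Suns_result}.
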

\begin{proof} 
Let us first show that $\mathcal K_R(A,B) \neq \emptyset$ if and only if there exists $(J,R,Q) \in \mathbb D\mathbb H_{\succ}^n$  such that $(I_n-BB^{\dagger})(A-(J-R)Q)=0$. 
Let $K\in \mathcal K_R(A,B)$, that is, $A-BK$ is stable. Then by Theorem~\ref{thm:gilsstab}, there exists $(J,R,Q)\in \mathbb D\mathbb H_{\succ}^n$
such that $A-BK=(J-R)Q$. This implies that $BK=A-(J-R)Q$ and thus
\[
BB^\dagger(A-(J-R)Q)=BB^\dagger BK=BK=A-(J-R)Q,
\]
since $BB^\dagger B=B$. Conversely, let $(I_n-BB^{\dagger})(A-(J-R)Q)=0$, and consider $K=B^\dagger (A-(J-R)Q)$. Then
$K$ satisfies
\[
A-BK=A-BB^\dagger(A-(J-R)Q)=A-A+(J-R)Q=(J-R)Q.
\]
This implies that $A-BK$ is a DH matrix and thus stable, hence $K \in \mathcal K_R(A,B)$. 

To show~\eqref{eq:equichar}, let $K$ be a matrix of the form $g(J,R,Q)-(I_m-B^{\dagger}B)Y$ satisfying $(I_n-BB^{\dagger})(A-(J-R)Q)=0$. We have that $A-BK=(J-R)Q$, and thus $K \in \mathcal K_R(A,B)$
because $(J,R,Q)\in \mathbb D\mathbb H_{\succ}^n$. This proves the inclusion $``\supseteq"$. 
For the inclusion $``\subseteq"$, let $K\in \mathcal K_R(A,B)$. By Theorem~\ref{thm:gilsstab} there exists $(J,R,Q)\in \mathbb D\mathbb H_{\succ}^n$
such that $BK=A-(J-R)Q$. In view of Lemma~\ref{lem:Suns_result} there exists $Y\in \R^{m,n}$
such that $K=g(J,R,Q)-(I_m-B^{\dagger}B)Y$ and $(I_n-BB^{\dagger})(A-(J-R)Q)=0$. This proves $``\subseteq"$. 
\end{proof}

\begin{corollary}\label{cor:1}
For a fixed triplet $(J,R,Q) \in \mathbb D\mathbb H_{\succ}^n $, define
\begin{equation}\label{eq:equichar2}
 \widetilde{\mathcal K}_R(J,R,Q):=\left \{K\in\mathcal K_R(A,B) \big|~K=g(J,R,Q)-(I_m-B^{\dagger}B)Y,\,Y\in \R^{m,n}\right \}.
\end{equation}
If $\widetilde{\mathcal K}_R(J,R,Q) \neq \emptyset$, then
\begin{equation}\label{eq:minnormjrq}
\min_{K\in  \widetilde{\mathcal K}_R(J,R,Q)} {\|K\|}_F={\|g(J,R,Q)\|}_F,\quad \text{and}\quad \min_{K\in \widetilde{\mathcal K}_R(J,R,Q)}{\|K\|}_2={\|g(J,R,Q)\|}_2.
\end{equation}
\end{corollary}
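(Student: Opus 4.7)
The plan is to show that $\widetilde{\mathcal K}_R(J,R,Q)$, whenever nonempty, coincides with the solution set of a linear matrix equation in $K$, and then read off the minimum norm from Lemma~\ref{lem:Suns_result}.

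The first step rests on the identity $B(I_m - B^\dagger B) = B - BB^\dagger B = 0$, which follows from the Moore-Penrose relation $BB^\dagger B = B$. It implies $BK = Bg(J,R,Q)$ for every $K$ in the parametric family $\{g(J,R,Q) - (I_m - B^\dagger B)Y : Y \in \R^{m,n}\}$, and hence $A - BK$ is independent of $Y$. Consequently, either every element of this family is stabilizing or none is. Combined with $\operatorname{range}(I_m - B^\dagger B) = \operatorname{null}(B)$, which shows that the parametric family coincides with the affine subspace $\{K : BK = Bg(J,R,Q)\}$, this yields
\[
\widetilde{\mathcal K}_R(J,R,Q) = \{K \in \R^{m,n} : BK = Bg(J,R,Q)\}
\]
whenever the left-hand side is nonempty.

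In the second step, I will apply Lemma~\ref{lem:Suns_result} to the equation $B\,K\,I_n = Bg(J,R,Q)$, viewing $K$ as the unknown. The lemma's consistency condition reduces once more to $BB^\dagger B = B$ and is therefore automatic, while the lemma produces minimum Frobenius and spectral norms both attained at $B^\dagger B g(J,R,Q)$. A short calculation using the identity $B^\dagger B B^\dagger = B^\dagger$ gives $B^\dagger B g(J,R,Q) = B^\dagger B B^\dagger(A-(J-R)Q) = g(J,R,Q)$, which finishes the proof of~\eqref{eq:minnormjrq}. The only nontrivial conceptual step is recognizing that the stability constraint is constant across the parametric family, so the problem reduces to a purely linear-algebraic minimum-norm problem; after that, the formula is a mechanical consequence of Lemma~\ref{lem:Suns_result}, and I do not anticipate further obstacles.
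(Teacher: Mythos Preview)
Your argument is correct and follows essentially the same route as the paper, which simply states that the result ``follows immediately from Lemma~\ref{lem:Suns_result} and Theorem~\ref{thm:completechar}.'' You have merely made the reasoning explicit: the identity $B(I_m-B^\dagger B)=0$ forces $A-BK$ to be constant across the parametric family, so nonemptiness of $\widetilde{\mathcal K}_R(J,R,Q)$ collapses it to the full affine solution set of $BK=Bg(J,R,Q)$, after which Lemma~\ref{lem:Suns_result} and $B^\dagger B B^\dagger=B^\dagger$ yield the claimed minimum norm.
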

\begin{proof}
This follows immediately from 
Lemma~\ref{lem:Suns_result} and 
Theorem~\ref{thm:completechar}.
\end{proof}

The following theorem gives an alternative way to check the existence of a stabilizing feedback. 
It can be useful in numerical methods as it reduces the number of variables depending on the rank of $B$; see also  Remark~\ref{rem:smallsize}.

\begin{theorem}\label{thm:equitemp}
Let $A\in \R^{n,n}$ and $B\in \R^{n,m}$. Let $U$ be an orthogonal matrix such that $U^TBB^{\dagger}U=\mat{cc}I_k&0\\0&0\rix$,
where $\operatorname{rank}(B)=\operatorname{rank}(BB^{\dagger})=k$. Let $\hat A=U^TAU=\mat{cc}\hat A_{11}&\hat A_{12}\\\hat A_{21}&\hat A_{22}\rix$,
where $\hat A_{11} \in \R^{k,k}$ and $\hat A_{22}\in \R^{n-k,n-k}$.
Then $\mathcal K_R(A,B)\neq \emptyset$ if and only if there exist $\hat Q(\succ 0)\in \R^{n,n}$,
$\hat J_{22}, \hat R_{22} \in \R^{n-k,n-k}$ and $ \hat J_{21}, \hat R_{22} \in \R^{n-k,k} $ such that $\hat J_{22}^T=-\hat J_{22}$,
$\hat R_{22}\succeq 0$, $\operatorname{null}(\hat R_{22})\subseteq \operatorname{null}(\hat R_{21}^T)$, and
\begin{equation}\label{eq:concl}
\mat{cc}\hat A_{21}& \hat A_{22}\rix= \mat{cc}\hat J_{21}-\hat R_{21}& \hat J_{22}-\hat R_{22}\rix \hat Q.
\end{equation}
\end{theorem}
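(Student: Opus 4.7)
My plan is to apply Theorem~\ref{thm:completechar}, which reduces the nonemptiness of $\mathcal K_R(A,B)$ to the existence of a triplet $(J,R,Q)\in\mathbb{DH}_\succ^n$ satisfying $(I_n-BB^\dagger)(A-(J-R)Q)=0$, and then to transport this condition through the orthogonal change of basis given by $U$. Setting $\hat J:=U^T J U$, $\hat R:=U^T R U$, $\hat Q:=U^T Q U$, the proof of Lemma~\ref{lem:unitequi} shows that $(J,R,Q)\in\mathbb{DH}_\succ^n$ if and only if $(\hat J,\hat R,\hat Q)\in\mathbb{DH}_\succ^n$; pre- and post-multiplying the equation by $U^T$ and $U$ and inserting $UU^T=I_n$ transforms the condition into $(I_n-U^T BB^\dagger U)(\hat A-(\hat J-\hat R)\hat Q)=0$. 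By the hypothesis on $U$, $I_n-U^T BB^\dagger U = \mat{cc} 0 & 0 \\ 0 & I_{n-k} \rix$, so this equation is equivalent to asserting that the last $n-k$ block rows of $(\hat J-\hat R)\hat Q$ coincide with $[\hat A_{21}\ \hat A_{22}]$.

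I then partition $\hat J = \mat{cc} \hat J_{11} & -\hat J_{21}^T \\ \hat J_{21} & \hat J_{22} \rix$ and $\hat R = \mat{cc} \hat R_{11} & \hat R_{21}^T \\ \hat R_{21} & \hat R_{22} \rix$ in conformal block form. The last $n-k$ block rows of $(\hat J-\hat R)\hat Q$ read $[\hat J_{21}-\hat R_{21}\ \ \hat J_{22}-\hat R_{22}]\hat Q$, which is exactly \eqref{eq:concl}. For the forward implication, $\hat J_{22}^T=-\hat J_{22}$ is forced by the skew-symmetry of $\hat J$; $\hat R_{22}\succeq 0$ follows because principal submatrices of a PSD matrix are PSD; and the inclusion $\operatorname{null}(\hat R_{22})\subseteq\operatorname{null}(\hat R_{21}^T)$ is exactly condition (ii) of Lemma~\ref{lem:psd_character} applied to $\hat R$.

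For the converse, given $\hat Q\succ 0$ and blocks $\hat J_{22},\hat R_{22},\hat J_{21},\hat R_{21}$ satisfying the stated hypotheses and \eqref{eq:concl}, I complete them to a full triplet by setting $\hat J_{11}:=0$ (so that $\hat J$ is skew-symmetric) and $\hat R_{11}:=\hat R_{21}^T\hat R_{22}^\dagger \hat R_{21}$. The null-space hypothesis together with Lemma~\ref{lem:psd_character} then certifies that the assembled $\hat R$ is PSD, so $(\hat J,\hat R,\hat Q)\in\mathbb{DH}_\succ^n$. Undoing the orthogonal transformation via $J:=U\hat J U^T$, $R:=U\hat R U^T$, $Q:=U\hat Q U^T$ produces a triplet in $\mathbb{DH}_\succ^n$ that verifies the hypothesis of Theorem~\ref{thm:completechar}, hence $\mathcal K_R(A,B)\neq\emptyset$.

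The main obstacle is purely the block bookkeeping, namely checking that the condition on the bottom $n-k$ rows of $(\hat J-\hat R)\hat Q$ involves only the four blocks listed in the statement (together with all of $\hat Q$), and then correctly invoking the Schur-complement form in Lemma~\ref{lem:psd_character} to complete $\hat R$ into a PSD matrix; once these are in place the equivalence reduces to a direct application of the previously established results.
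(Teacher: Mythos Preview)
Your proposal is correct and follows essentially the same approach as the paper's own proof: both reduce to Theorem~\ref{thm:completechar}, conjugate by $U$ to obtain the block form, read off \eqref{eq:concl} from the bottom $n-k$ rows, and for the converse complete $\hat J$ and $\hat R$ (you choose $\hat J_{11}=0$ and $\hat R_{11}=\hat R_{21}^T\hat R_{22}^\dagger\hat R_{21}$, while the paper allows any skew-symmetric $\hat J_{11}$ and any $\hat R_{11}$ with $\hat R_{11}-\hat R_{21}^T\hat R_{22}^\dagger\hat R_{21}\succeq 0$) before invoking Lemma~\ref{lem:psd_character} and undoing the orthogonal transformation.
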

\begin{proof} First suppose that $\mathcal K_R(A,B) \neq \emptyset$, and let $K \in \mathcal K_R(A,B)$.
Then from Theorem~\ref{thm:completechar} there exists $(J,R,Q)\in \mathbb D\mathbb H_\succ^n$ such that
\begin{equation}\label{eq:diff1}
(I_n-BB^\dagger)(A-(J-R)Q)=0.
\end{equation}
Multiplying~\eqref{eq:diff1} by $U^T$ from the left and by $U$ from the right, by using the fact that $U^TU=I_n$,
and by setting
\[
\hat J=U^TJU=\mat{cc}\hat J_{11} & -\hat J_{21}^T\\ \hat J_{21}& \hat J_{22}\rix,
\hat R=U^TRU=\mat{cc}\hat R_{11} & \hat R_{21}^T\\ \hat R_{21}& \hat R_{22}\rix,
\hat Q=U^TQU=\mat{cc}\hat Q_{11} & \hat Q_{21}^T\\ \hat Q_{21}& \hat Q_{22}\rix,
\]
we have that
\[
\mat{cc}0&0 \\0 & I_{n-k}\rix
\Big(
\mat{cc}\hat A_{11}&\hat A_{12}\\\hat A_{21}&\hat A_{22}\rix-
\mat{cc}\hat J_{11}-\hat R_{11} & -\hat J_{21}^T-\hat R_{21}^T\\ \hat J_{21}-\hat R_{21}& \hat J_{22} - \hat \R_{22}\rix
\mat{cc}\hat Q_{11} & \hat Q_{21}^T\\ \hat Q_{21}& \hat Q_{22}\rix
\Big)=0.
\]
This implies that
\[
\mat{cc}\hat A_{21}& \hat A_{22}\rix= \mat{cc}\hat J_{21}-\hat R_{21}& \hat J_{22}-\hat R_{22}\rix \hat Q.
\]
Note that $\hat Q \succ 0$ and $\hat J_{22}^T=-\hat J_{22}$, because $U$ is orthogonal and $(J,R,Q)\in \mathbb D\mathbb H_\succ^n$.
Also from Lemma~\ref{lem:psd_character} $\hat R_{22}\succeq 0$ and
$\text{null}(\hat R_{22})\subseteq \text{null}(\hat R_{21}^T)$, since $\hat R\succeq 0$. This proves the ``if"
part.

Conversely, suppose that~\eqref{eq:concl} holds. Let $\hat R_{11}, \hat J_{11} \in \R^{k,k}$ be chosen such that
$\hat J_{11}^T=-\hat J_{11}$ and $\hat R_{11}-\hat R_{21}\hat R_{22}^{\dagger}\hat R_{21} \succeq 0$
(for example: $\hat R_{11}=\hat R_{21}\hat R_{22}^{\dagger}\hat R_{21}$) and let
$\hat J=U^TJU=\mat{cc}\hat J_{11} & -\hat J_{21}^T\\ \hat J_{21}& \hat J_{22}\rix$ and
$\hat R=U^TRU=\mat{cc}\hat R_{11} & \hat R_{21}^T\\ \hat R_{21}& \hat R_{22}\rix$.
Observe that $\hat J^T=-\hat J$, $\hat R \succeq 0$ (Lemma~\ref{lem:psd_character}) and
\begin{equation}\label{eq:temp1}
\mat{cc}0&0\\0&I_{n-k}\rix(\hat A-(\hat J-\hat R)\hat Q)=0.
\end{equation}
Multiplying~\eqref{eq:temp1} by $U$ from the left and by $U^T$ from the right, and by using the fact that $U^TU=I_n$,
we get
\[
(I_{n}-BB^{\dagger})(A-(U\hat J U^T- U\hat R U^T) U\hat Q U^T)=0.
\]
Thus from Theorem~\ref{thm:completechar}, $K=B^{\dagger}(A-(U\hat J U^T- U\hat R U^T) U\hat Q U^T)$
satisfies $A-BK=(U\hat J U^T- U\hat R U^T) U\hat Q U^T$. This implies from Lemma~\ref{lem:unitequi} that $A-BK$
is a DH matrix and thus $K \in \mathcal K_{R}(A,B)$.
\end{proof}

It is well known that if the system pair $(A,B)$ is controllable, then $\mathcal K_R(A,B) \neq \emptyset$; see~\cite{TreSAH12}.
In the following, we obtain a different sufficient condition for the existence of a stabilizing feedback for pair $(A,B)$ which is a corollary of Theorem~\ref{thm:equitemp}.
\begin{corollary}
Let $A\in \R^{n,n}$ and $B\in \R^{n,m}$. Let $U$ be an orthogonal matrix such that $U^TBB^{\dagger}U=\mat{cc}I_k&0\\0&0\rix$,
where $\operatorname{rank}(B)=\operatorname{rank}(BB^{\dagger})=k$. Let $\hat A=U^TAU=\mat{cc}\hat A_{11}&\hat A_{12}\\\hat A_{21}&\hat A_{22}\rix$,
where $\hat A_{11} \in \R^{k,k}$ and $\hat A_{22}\in \R^{n-k,n-k}$. If $\hat A_{22}$ is stable, then
$\mathcal K_R(A,B)\neq \emptyset$.
\end{corollary}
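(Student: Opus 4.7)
The plan is to apply Theorem~\ref{thm:equitemp}: it suffices to exhibit matrices $\hat Q \succ 0$, $\hat J_{22}$, $\hat R_{22}$, $\hat J_{21}$, $\hat R_{21}$ satisfying $\hat J_{22}^T=-\hat J_{22}$, $\hat R_{22}\succeq 0$, $\operatorname{null}(\hat R_{22})\subseteq\operatorname{null}(\hat R_{21}^T)$, and equation~\eqref{eq:concl}. The idea is to build these blockwise, using the hypothesis on $\hat A_{22}$ to take care of the second block and then having the first block essentially for free.

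First, since $\hat A_{22}\in\R^{n-k,n-k}$ is stable, Theorem~\ref{thm:gilsstab} yields a DH decomposition $\hat A_{22}=(\tilde J_{22}-\tilde R_{22})\tilde Q_{22}$ with $\tilde J_{22}^T=-\tilde J_{22}$, $\tilde R_{22}\succeq 0$, and $\tilde Q_{22}\succ 0$. I would then set $\hat J_{22}:=\tilde J_{22}$, $\hat R_{22}:=\tilde R_{22}$, and take $\hat Q$ block diagonal,
\[
\hat Q:=\mat{cc}I_k & 0\\ 0 & \tilde Q_{22}\rix,
\]
which is positive definite. For the first block, I would choose $\hat R_{21}:=0$, which makes the null-space inclusion $\operatorname{null}(\hat R_{22})\subseteq \operatorname{null}(\hat R_{21}^T)=\R^{n-k}$ trivially true, and then set $\hat J_{21}:=\hat A_{21}$. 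Multiplying out $[\hat J_{21}-\hat R_{21}\ \ \hat J_{22}-\hat R_{22}]\hat Q$ yields $[\hat A_{21}\cdot I_k\ \ (\tilde J_{22}-\tilde R_{22})\tilde Q_{22}]=[\hat A_{21}\ \ \hat A_{22}]$, which is exactly~\eqref{eq:concl}.

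There is essentially no obstacle: Theorem~\ref{thm:equitemp} imposes no symmetry or sign constraint on the off-diagonal piece $\hat J_{21}$, so once $\hat R_{21}=0$ is chosen to void the null-space condition, $\hat J_{21}$ can freely absorb $\hat A_{21}$. Thus all hypotheses of Theorem~\ref{thm:equitemp} are verified, and we conclude $\mathcal K_R(A,B)\neq\emptyset$.
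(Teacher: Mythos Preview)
Your proposal is correct and follows essentially the same route as the paper: both use the DH decomposition of $\hat A_{22}$ from Theorem~\ref{thm:gilsstab}, take $\hat Q$ block diagonal with $I_k$ and $\tilde Q_{22}$, set $\hat R_{21}=0$ and $\hat J_{21}=\hat A_{21}$, and then conclude via Theorem~\ref{thm:equitemp}. The only cosmetic difference is that the paper assembles the full $n\times n$ matrices $\hat J,\hat R,\hat Q$ and retraces the ``converse'' argument of Theorem~\ref{thm:equitemp}, whereas you invoke that theorem directly as a black box; your version is slightly cleaner.
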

\begin{proof}  Since $\hat A_{22}$ is stable, from Theorem~\ref{thm:gilsstab} there exist
$(\hat J_{22},\hat R_{22},\hat Q_{22}) \in \mathbb D \mathbb H_\succ^{n-k}$ such that $A_{22}=(\hat J_{22}-\hat R_{22})\hat Q_{22}$.
Define
\[
\hat J=\mat{cc}\hat J_{11} & -\hat J_{21}^T\\ \hat J_{21}& \hat J_{22}\rix,
\hat R=\mat{cc}0 & 0\\ 0& \hat R_{22}\rix,
\hat Q=\mat{cc}I_k & 0\\ 0& \hat Q_{22}\rix,
\]
where $\hat J_{11} \in \R^{k,k}$ is any skew-symmetric matrix and $\hat J_{21}=\hat A_{21}$.
Note that $(\hat J,\hat R,\hat Q)\in \mathbb D \mathbb H_\succ^n$.
Further define
\[
J=U\hat J U^T,\quad R=U\hat R U^T,\quad \text{and}\quad Q=U\hat Q U^T.
\]
Proceeding as in Theorem~\ref{thm:equitemp}, we have that
$(I_n-BB^\dagger)(A-(J-R)Q)=0$, and the matrix $K =B^\dagger(A-(J-R)Q) $ stabilizes the system pair $(A,B)$ hence $K\in \mathcal K_R(A,B)$.
\end{proof} 

Note that the converse of the above corollary does not hold.  For example, consider 
\[
A =\mat{cccc}
   -0.3633 &  -0.2867 &  -0.7294 &  -2.2033\\
   -1.0206 &  -0.1973 &   1.1473 &  -0.5712\\
   -3.0730 &   0.4056 &   0.5979 &   0.2140\\
    0.6263 &  -1.4193 &  -1.2813 &   0.9424
    \rix \text{ and }  
B =\mat{cc}
    0.0937 &  -0.9610\\
   -1.1223 &  -0.6537\\
    0.3062 &  -1.2294\\
   -1.1723 &  -0.2710
   \rix, 
\]
where $\Lambda(A)=   \{1.9604 + 1.8099i,1.9604 + 1.8099i, -1.8378,-1.1032 \}$. We have 
\[
B^{\dagger}=    \mat{cc} 0.1089 & -0.3490\\ -0.3787 & -0.1472\\   0.2103 & -0.4607\\ -0.4269& -0.0073
   \rix^T~\text{and}\quad
    U = \mat{cccc}
   -0.5879  &  0.0000 &   0.7931 &   0.1595\\
   -0.1803  & -0.6991 &   0.0055 &  -0.6919\\
   -0.7866  &  0.1098 &  -0.6010 &   0.0893\\
    0.0562  & -0.7066 &  -0.0988 &   0.6985
    \rix.
\]
For the pair $(A,B)$, the SSF problem is feasible, that is, $\mathcal K_R(A,B)\neq \emptyset$ as
\[
   K =\mat{cccc}
    3.4237 &  27.5800 &   1.9374 & -35.6683\\
   10.1752 & -23.1448 & -11.0932 &  20.1984
   \rix
\]
with $\Lambda(A-BK)=   \{-9.9703,-8.7173,-2.4493,-3.1767\}$. However, for the
transformed matrix
\[
\hat A=U^TAU=\mat{cc} \hat A_{11} & \hat A_{12}\\ \hat A_{21} & \hat A_{22}\rix, \quad \text{where}\quad
\hat A_{22}=\mat{cc}
    1.8596 &  -0.9818\\
    1.6202 &   1.0747
    \rix
\]
we have $\Lambda(A_{22})= \{1.4672 + 1.1986i, 1.4672 - 1.1986i\}$. This implies
$\mathcal K_R(A,B) \neq \emptyset$ while $\hat A_{22}$ is not stable.

In the following theorem, 
we summarize the various necessary and sufficient conditions for the existence of a static stabilizing feedback for a given system pair $(A,B)$.

\newpage 

\begin{theorem}
Let $A\in \R^{n,n}$ and $B\in \R^{n,m}$. Then the following are equivalent.
\begin{enumerate}
\item $\mathcal W_R(A,B) \neq \emptyset$.
\item $\mathcal K_R(A,B) \neq \emptyset$.
\item There exists $(J,R,Q) \in \mathbb D\mathbb H_{\succ}^n$  such that $A-BK=(J-R)Q$
for some $K\in \R^{m,n}$.
\item There exists $(J,R,Q) \in \mathbb D\mathbb H_{\succ}^n$ such that $A-BB^{\dagger}W=(J-R)Q$
for some $W\in \R^{n,n}$.
\item There exists $(J,R,Q) \in \mathbb D\mathbb H_{\succ}^n$  such that $(I_n-BB^{\dagger})(A-(J-R)Q)=0$.
\item There exist $\hat Q(\succ 0)\in \R^{n,n}$,
$\hat J_{22}, \hat R_{22} \in \R^{n-k,n-k}$ and $ \hat J_{21}, \hat R_{22} \in \R^{n-k,k} $ such that $\hat J_{22}^T=-\hat J_{22}$, $\hat R_{22}\succeq 0$, $\operatorname{null}(\hat R_{22})\subseteq \operatorname{null}(\hat R_{21}^T)$, and
\begin{equation*}
\mat{cc}\hat A_{21}& \hat A_{22}\rix= \mat{cc}\hat J_{21}-\hat R_{21}& \hat J_{22}-\hat R_{22}\rix \hat Q,
\end{equation*}
where $\hat A=U^TAU=\mat{cc}\hat A_{11}&\hat A_{12}\\\hat A_{21}&\hat A_{22}\rix$ with $\hat A_{11} \in \R^{k,k}$, $\hat A_{22}\in \R^{n-k,n-k}$ and 
$U$ is an orthogonal matrix such that $U^TBB^{\dagger}U=\mat{cc}I_k&0\\0&0\rix$ with $\operatorname{rank}(B)=k$.
\end{enumerate}
\end{theorem}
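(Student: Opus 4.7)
My plan is to observe that almost all of the content of this theorem has already been established in the preceding results, so the proof reduces to a sequence of short reductions. The nontrivial equivalences (2)$\Leftrightarrow$(5) and (2)$\Leftrightarrow$(6) are precisely what Theorem~\ref{thm:completechar} and Theorem~\ref{thm:equitemp} deliver; the remaining work is to connect items (1), (3) and (4) to (2). I would organize the proof as a cycle $(1)\Leftrightarrow(2)\Leftrightarrow(3)$, $(2)\Leftrightarrow(5)$, $(2)\Leftrightarrow(6)$, and $(1)\Leftrightarrow(4)$.

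For (2)$\Leftrightarrow$(3) I would simply apply Theorem~\ref{thm:gilsstab} to the matrix $A-BK$: stability of $A-BK$ is equivalent to it admitting a factorization $(J-R)Q$ with $(J,R,Q)\in\mathbb D\mathbb H_\succ^n$. Thus the existence of a stabilizing $K$ is literally the existence of a triplet and a $K$ with $A-BK=(J-R)Q$.

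For (1)$\Leftrightarrow$(2), I would use the mutually inverse changes of variable $K\mapsto BK$ and $W\mapsto B^\dagger W$. If $K\in\mathcal K_R(A,B)$, set $W:=BK$; then $BB^\dagger W=BB^\dagger BK=BK$ by the identity $BB^\dagger B=B$, so $A-BB^\dagger W=A-BK$ is stable, and since in the pair case $C^\dagger C$ reduces to the identity, $W\in\mathcal W_R(A,B)$. Conversely, given $W\in\mathcal W_R(A,B)$, the matrix $K:=B^\dagger W$ satisfies $A-BK=A-BB^\dagger W$, which is stable. The equivalence (1)$\Leftrightarrow$(4) then follows by composing this substitution with Theorem~\ref{thm:gilsstab}, exactly as in (2)$\Leftrightarrow$(3).

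The only point requiring any attention is the verification that these changes of variable preserve the "stable" property on the nose, which hinges on $BB^\dagger B=B$; everything else is a direct citation of Theorem~\ref{thm:gilsstab}, Theorem~\ref{thm:completechar}, or Theorem~\ref{thm:equitemp}. I do not expect any genuine obstacle here: the statement is a consolidation of previously proved facts, and the proof is correspondingly short.
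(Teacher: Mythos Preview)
Your proposal is correct and matches the paper's own proof almost exactly: the paper likewise cites Theorem~\ref{thm:completechar} for $(2)\Leftrightarrow(5)$, Theorem~\ref{thm:equitemp} for $(2)\Leftrightarrow(6)$, Theorem~\ref{thm:gilsstab} for $(2)\Leftrightarrow(3)$ and $(1)\Leftrightarrow(4)$, and handles $(1)\Leftrightarrow(2)$ via the same substitutions $W=BK$ and $K=B^\dagger W$ (the paper adds the harmless extra terms $(I_n-BB^\dagger)N$ and $(I_m-B^\dagger B)Y$, which are not needed for mere nonemptiness). There is nothing to add.
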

\begin{proof} 
$1)\Leftrightarrow 2)$: This follows immediately from the relationship between the two sets $\mathcal W_R(A,B)$ and $\mathcal K_R(A,B)$. In fact, 
for any $W\in \mathcal W_R(A,B)$ and $Y \in \R^{m,n}$, we have 
$K=B^{\dagger}W+(I_m-B^{\dagger}B)Y \in K_R(A,B)$, while for any $K\in \mathcal K_R(A,B)$
and $N\in \R^{n,n}$, we have 
 $W=BK+(I_n-BB^{\dagger})N \in \mathcal W_R(A,B)$~\cite{Per12}. 

$3)\Rightarrow 2)$ and $4)\Rightarrow 1)$ follow trivially, and $2)\Rightarrow 3)$ and $1)\Rightarrow 4)$
follow from Theorem~\ref{thm:gilsstab}.

$2)\Leftrightarrow 5)$: This follows from Theorem~\ref{thm:completechar}.

$2)\Leftrightarrow 6)$: This follows from Theorem~\ref{thm:equitemp}.
\end{proof} 

\section{DH characterization of static-output stabilizing feedbacks}\label{sec:triplet}

We first state a result similar to Theorem~\ref{thm:completechar} that characterizes
the set of SOFs in terms of DH matrices.  For this, let $(J,R,Q) \in (\R^{n,n})^3$ and define
\begin{equation}
f(J,R,Q):=B^{\dagger}(A-(J-R)Q)C^{\dagger}.
\end{equation}
\begin{theorem}\label{thm:completecharABC}
Let $(A,B,C)$ be a system triplet. Then 
\[
K(A,B,C) = \left\{
f(J,R,Q)\text{$+$}Z\text{$-$}B^{\dagger}BZCC^\dagger 
\Big|  
(J,R,Q) \in \mathbb D\mathbb H_{\succ}^n,
B f(J,R,Q) C\text{$=$}A\text{$-$}(J\text{$-$}R)Q, 
Z \in \R^{m \times p} 
\right\}. 
\]
\end{theorem}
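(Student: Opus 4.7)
The plan is to mirror the proof of Theorem~\ref{thm:completechar}, replacing the one-sided application of Lemma~\ref{lem:Suns_result} (for the equation $BK = A-(J-R)Q$) with the two-sided application needed for the SOF equation $BKC = A-(J-R)Q$. The two key ingredients are the DH characterization of stability (Theorem~\ref{thm:gilsstab}) and Sun's parametrization (Lemma~\ref{lem:Suns_result}).

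First I would translate the membership $K \in \mathcal K(A,B,C)$ into an algebraic equation: $K \in \mathcal K(A,B,C)$ iff $A-BKC$ is stable iff, by Theorem~\ref{thm:gilsstab}, there exists $(J,R,Q)\in \mathbb D\mathbb H_\succ^n$ with $BKC = A-(J-R)Q$. Applying Lemma~\ref{lem:Suns_result} to this matrix equation (with Sun's $A$ and $B$ being the present $B$ and $C$) tells us that such a $K$ exists if and only if
\[
BB^{\dagger}\bigl(A-(J-R)Q\bigr)CC^{\dagger} = A-(J-R)Q,
\]
and this compatibility condition is exactly $Bf(J,R,Q)C = A-(J-R)Q$ by the definition of $f$. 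When it holds, Sun's lemma parametrizes the solution set as $\{B^{\dagger}(A-(J-R)Q)C^{\dagger} + Z - B^{\dagger}BZCC^{\dagger} : Z \in \R^{m,p}\} = \{f(J,R,Q) + Z - B^{\dagger}BZCC^{\dagger} : Z \in \R^{m,p}\}$.

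For the inclusion ``$\supseteq$'', I would take any $K$ of the stated form with $(J,R,Q) \in \mathbb D\mathbb H_\succ^n$ satisfying $Bf(J,R,Q)C = A-(J-R)Q$, and verify by direct multiplication (using $BB^{\dagger}B = B$ and $CC^{\dagger}C = C$) that $BKC = Bf(J,R,Q)C + BZC - BZC = A-(J-R)Q$, hence $A-BKC = (J-R)Q$ is a DH matrix and thus stable, so $K \in \mathcal K(A,B,C)$. For ``$\subseteq$'', I would take $K \in \mathcal K(A,B,C)$, invoke Theorem~\ref{thm:gilsstab} to produce $(J,R,Q) \in \mathbb D\mathbb H_\succ^n$ with $BKC = A-(J-R)Q$, then apply Lemma~\ref{lem:Suns_result} to obtain both the compatibility condition (which is the required $Bf(J,R,Q)C = A-(J-R)Q$) and a $Z \in \R^{m,p}$ for which $K = f(J,R,Q) + Z - B^{\dagger}BZCC^{\dagger}$.

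There isn't really a major obstacle: the whole argument is a direct transcription of the proof of Theorem~\ref{thm:completechar} from the one-sided case to the two-sided case. The only mild care needed is to observe that the two superficially different ways of writing the feasibility condition, namely $(I_n - BB^{\dagger})(A-(J-R)Q) = 0$ together with $(A-(J-R)Q)(I_n - CC^{\dagger}) = 0$ on the one hand, and the compact form $Bf(J,R,Q)C = A-(J-R)Q$ on the other, are equivalent; this is immediate since $Bf(J,R,Q)C = BB^{\dagger}(A-(J-R)Q)CC^{\dagger}$, so requiring equality with $A-(J-R)Q$ forces both projection identities at once.
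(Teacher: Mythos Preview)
Your proposal is correct and follows exactly the approach the paper intends: the paper's own proof simply reads ``The proof is similar to Theorem~\ref{thm:completechar},'' and you have carried out precisely that transcription, invoking Theorem~\ref{thm:gilsstab} for the DH representation and Lemma~\ref{lem:Suns_result} for the two-sided parametrization. One small typo in your final parenthetical remark: the right-hand condition should read $(A-(J-R)Q)(I_n - C^{\dagger}C)=0$ (with $C^{\dagger}C \in \R^{n,n}$), not $CC^{\dagger}$, for the dimensions to match.
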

\begin{proof} 
 The proof is similar to Theorem~\ref{thm:completechar}.
\end{proof}

A corollary similar to Corollary~\ref{cor:1} is given in the following.
\begin{corollary}
For a fixed triplet $(J,R,Q) \in \mathbb D\mathbb H_{\succ}^n $, define
\begin{equation}\label{eq:equichar2ABC}
 {\widetilde{ \mathcal K}}(J,R,Q):=\left \{K\in\mathcal K(A,B,C) \big|~K=f(J,R,Q)+Z-B^{\dagger}BZCC^\dagger,\,Z\in \R^{m,p}\right \}.
\end{equation}
If $\widetilde{\mathcal K}(J,R,Q) \neq \emptyset$, then
\begin{equation}\label{eq:minnormjrqABC}
\min_{K\in  \widetilde{\mathcal K}(J,R,Q)} {\|K\|}_F={\|f(J,R,Q)\|}_F,\quad \text{and}\quad \min_{K\in \widetilde{\mathcal K}(J,R,Q)}{\|K\|}_2={\|f(J,R,Q)\|}_2.
\end{equation}
\end{corollary}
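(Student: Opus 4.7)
The plan is to reduce this corollary directly to Sun's result (Lemma~\ref{lem:Suns_result}) together with the parametrization provided by Theorem~\ref{thm:completecharABC}, mirroring exactly the two-line proof of Corollary~\ref{cor:1} in the SSF case.

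First I would observe that, by Theorem~\ref{thm:completecharABC}, any $K\in\widetilde{\mathcal K}(J,R,Q)$ satisfies $BKC=A-(J-R)Q$, and conversely any $K$ satisfying $BKC=A-(J-R)Q$ belongs to $\widetilde{\mathcal K}(J,R,Q)$. Thus the set $\widetilde{\mathcal K}(J,R,Q)$ is precisely the solution set of the linear matrix equation $BKC = A-(J-R)Q$ in the unknown $K$. The assumption $\widetilde{\mathcal K}(J,R,Q)\neq\emptyset$ therefore says that this equation is consistent.

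Next I would apply Lemma~\ref{lem:Suns_result} with the substitutions $A\leftarrow B$, $B\leftarrow C$, $C\leftarrow A-(J-R)Q$, and $E\leftarrow K$. Consistency of $BKC=A-(J-R)Q$ is, by Sun's lemma, equivalent to the compatibility condition $BB^{\dagger}(A-(J-R)Q)C^{\dagger}C = A-(J-R)Q$, which is guaranteed here by the nonemptiness assumption (and is also implicit in the hypothesis $Bf(J,R,Q)C=A-(J-R)Q$ used in Theorem~\ref{thm:completecharABC}). Lemma~\ref{lem:Suns_result} then yields both the parametrization $K = B^{\dagger}(A-(J-R)Q)C^{\dagger} + Z - B^{\dagger}BZ CC^{\dagger} = f(J,R,Q)+Z-B^{\dagger}BZCC^{\dagger}$ used in the definition of $\widetilde{\mathcal K}(J,R,Q)$, and the minimum-norm identities
\[
\min_{K\in \widetilde{\mathcal K}(J,R,Q)}\|K\|_F = \|B^{\dagger}(A-(J-R)Q)C^{\dagger}\|_F = \|f(J,R,Q)\|_F,
\]
and identically for the spectral norm, which is exactly~\eqref{eq:minnormjrqABC}.

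There is essentially no obstacle: the only nontrivial step is the bookkeeping confirmation that the compatibility hypothesis of Sun's lemma holds under our nonemptiness assumption, and this is immediate because any $K\in\widetilde{\mathcal K}(J,R,Q)$ exhibits a solution to $BKC=A-(J-R)Q$, which forces $BB^{\dagger}(A-(J-R)Q)C^{\dagger}C=BB^{\dagger}(BKC)C^{\dagger}C=BKC=A-(J-R)Q$. Hence the corollary follows from Lemma~\ref{lem:Suns_result} and Theorem~\ref{thm:completecharABC}.
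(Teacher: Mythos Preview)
Your approach matches the paper's: the paper states this corollary without a separate proof, presenting it as the direct analogue of Corollary~\ref{cor:1}, whose proof is the one-line invocation of Lemma~\ref{lem:Suns_result} together with the corresponding characterization theorem.

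That said, your write-up contains a small circularity worth flagging. You assert that any $K\in\widetilde{\mathcal K}(J,R,Q)$ satisfies $BKC=A-(J-R)Q$, and then use this identity in the last paragraph to verify the compatibility condition of Sun's lemma. But the identity does not follow from the definition of $\widetilde{\mathcal K}(J,R,Q)$: for $K=f(J,R,Q)+Z-B^{\dagger}BZCC^{\dagger}$ one computes directly $BKC=Bf(J,R,Q)C=BB^{\dagger}(A-(J-R)Q)C^{\dagger}C$, which need not equal $A-(J-R)Q$ (take e.g.\ $B=0$ with $(J-R)Q\neq A$ and $A$ stable). The clean fix is to apply Lemma~\ref{lem:Suns_result} to the equation $BKC=Bf(J,R,Q)C$, whose compatibility condition holds automatically. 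Since $BKC$ is constant over the parametrized family, either every such $K$ lies in $\mathcal K(A,B,C)$ or none does; hence when $\widetilde{\mathcal K}(J,R,Q)\neq\emptyset$ it coincides with the full Sun solution set, and the lemma yields minimal norm $\|B^{\dagger}(Bf(J,R,Q)C)C^{\dagger}\|=\|f(J,R,Q)\|$.
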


It is easy to see that if the matrix triplet $(A,B,C)$ is stabilizable, then the pairs
$(A,B)$ and $(A,C)$ are necessarily stabilizable. Indeed, if $K \in \mathcal K(A,B,C)$, then
$KC \in \mathcal K_R(A,B)$ and $BK \in \mathcal K_L(A,C)$. In general, however, stabilization of
$(A,B)$ and $(A,C)$ do not guarantee the stabilization of $(A,B,C)$. To ensure this, there must exist $Y \in \mathcal W_R(A,B)$ and a $Z \in \mathcal W_L(A,C)$
 such that $BB^\dagger Y=ZC^\dagger C$~\cite[Lemma 3.1]{Per16}. 
In the following, we give a different sufficient condition for the stabilizability of $(A,B,C)$  in terms of DH matrices; if 
 the stabilizability of $(A,B)$ and $(A,C)$ is determined simultaneously by the same stable matrix, that is, 
if there exists a $Y \in \mathcal K_R(A,B)$ and a $Z \in \mathcal K_L(A,C)$ such that
$A-BY=A-ZC$, then $(A,B,C)$ is stabilizable.
\begin{theorem}\label{thm:simulcon}
Let $(A,B,C)$ be a given system triplet. 
Then $(A,B,C)$ is stabilizable
if and only if there exists a $(J,R,Q) \in \mathbb D\mathbb H_{\succ}^n$ such that
\begin{equation}\label{eq:simul}
(I_n-BB^\dagger)(A-(J-R)Q)=0\quad \text{and}\quad (A-(J-R)Q)(C^\dagger C-I_n)=0.
\end{equation}
 If the later conditions are 
satisfied, then for all SOFs related to  such $(J,R,Q)$ we have
\begin{equation}\label{jointcond}
\widetilde{\mathcal K}(J,R,Q)=\left \{
f(J,R,Q)  +(I_m-B^\dagger B)Y + Z(I_s-CC^\dagger)~\big|~
Y,Z\in \R^{m,s} \right\}.
\end{equation}
\end{theorem}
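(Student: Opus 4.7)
The plan is to prove the equivalence by reducing the existence of a stabilizing output feedback to the solvability of the matrix equation $BKC = A-(J-R)Q$, and then to verify the parametrization of $\widetilde{\mathcal K}(J,R,Q)$ by projection arithmetic.

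For necessity, I would pick any $K \in \mathcal K(A,B,C)$ and apply Theorem~\ref{thm:gilsstab} to obtain $(J,R,Q) \in \mathbb D\mathbb H_\succ^n$ with $A - BKC = (J-R)Q$. Multiplying this identity on the left by $I_n - BB^\dagger$, and using $(I_n - BB^\dagger)B = 0$, yields the first equation in~\eqref{eq:simul}; symmetrically, multiplying on the right by $C^\dagger C - I_n$, and using $C(C^\dagger C - I_n) = 0$, yields the second.

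For sufficiency, the two conditions in~\eqref{eq:simul} say that $A - (J-R)Q$ is left-invariant under $BB^\dagger$ and right-invariant under $CC^\dagger$. Chaining them gives $BB^\dagger (A - (J-R)Q) CC^\dagger = A - (J-R)Q$, which is exactly the consistency condition in Lemma~\ref{lem:Suns_result} applied to the equation $BKC = A - (J-R)Q$. Any solution $K$ then satisfies $A - BKC = (J-R)Q$, which is a DH matrix, hence stable by Theorem~\ref{thm:gilsstab}, so $K \in \mathcal K(A,B,C)$ and $(A,B,C)$ is stabilizable.

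For the explicit description~\eqref{jointcond}, Theorem~\ref{thm:completecharABC} and its corollary give $\widetilde{\mathcal K}(J,R,Q) = \{f(J,R,Q) + V - B^\dagger B V CC^\dagger : V \in \R^{m,p}\}$. I would show this equals the right-hand side of~\eqref{jointcond} in both directions. The identity
\[
V - B^\dagger B V CC^\dagger = (I_m - B^\dagger B)V + B^\dagger B V (I_p - CC^\dagger)
\]
gives the inclusion $\subseteq$ by taking $Y = V$ and letting the second free parameter be $B^\dagger B V$. For the reverse inclusion, given $Y$ and $Z$ in the target parametrization, the choice $V := (I_m - B^\dagger B)Y + Z(I_p - CC^\dagger)$ satisfies $B^\dagger B V CC^\dagger = 0$, using the idempotency relations $B^\dagger B(I_m - B^\dagger B) = 0$ and $(I_p - CC^\dagger)CC^\dagger = 0$, so that $V - B^\dagger B V CC^\dagger = V$ recovers the desired element. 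The argument is mostly bookkeeping with projectors; the only nontrivial observation is recognizing that the two conditions in~\eqref{eq:simul} together match precisely the solvability hypothesis of Lemma~\ref{lem:Suns_result}.
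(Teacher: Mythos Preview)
Your proof is correct and follows essentially the same route as the paper's: both directions of the equivalence rest on Theorem~\ref{thm:gilsstab} and the solvability criterion of Lemma~\ref{lem:Suns_result}, and the parametrization~\eqref{jointcond} is obtained from the description in Theorem~\ref{thm:completecharABC} via projector identities. Your write-up is in fact more explicit than the paper's, which dispatches sufficiency with ``easy to check'' and~\eqref{jointcond} with ``follows immediately''; in particular, your two-inclusion argument for~\eqref{jointcond} spells out what the paper leaves to the reader.
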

\begin{proof}
It is easy to check that for a given $(J,R,Q) \in \mathbb D\mathbb H_{\succ}^n$ satisfying~\eqref{eq:simul}, the matrix $f(J,R,Q) \in  \mathcal K(A,B,C)$.
Conversely, let $K \in \mathcal K(A,B,C)$, that is, $A-BKC$ is stable. This implies 
$A-BKC=(J-R)Q$ for some $(J,R,Q)\in \mathbb D\mathbb H_{\succ}^n$, and thus
$KC \in \mathcal K_R(A,B)$ and $BK \in \mathcal K_L(A,C)$.  Hence
as an application of Lemma~\ref{lem:Suns_result}, $(J,R,Q)$ satisfies~\eqref{eq:simul}. 
Further,~\eqref{jointcond} follows immediately by using the fact that 
$BB^\dagger B=B$ and $CC^\dagger C=C$.
\end{proof}

\section{Computing stabilizing feedback matrices}\label{sec:algorithm}

In this section, we exploit the results obtained in the previous sections and present a new framework
based on DH matrices to attack the SSF and SOF problems.

\subsection{Stabilization of a system pair $(A,B)$}

In this section, we focus on stabilizing a system pair $(A,B)$, that is, finding $K$ such that $A-BK$ is stable. 
We propose Algorithm~\ref{algo:stabpair} which consists in two mains steps described in the next subsections: first finding a feasible solution and then improving this solution.

\subsubsection{Feasibility problem}

A necessary and sufficient condition was obtained in Theorem~\ref{thm:completechar} 
for the feasibility of the static feedback problem in terms of DH matrices: 
$\mathcal K_R(A,B) \neq \emptyset$ if and only if there exists
$(J,R,Q)\in \mathbb D\mathbb H_\succ^n$ such that $(I_n-BB^\dagger)(A-(J-R)Q)=0$. 
Trying to find a feasible solution of the latter equation  can be done by considering the following optimization problem 
\begin{equation}\label{eq:algo1ab}
\mu:=\inf_{J,R,Q\in \R^{n,n}, J^T=-J,R\succeq 0,Q\succ 0}{\left \|(I_n-BB^\dagger)(A-(J-R)Q)\right \|}, 
\end{equation}
and checking whether $\mu = 0$, that is, $\mathcal K_R(A,B) \neq \emptyset$ if and only if $\mu =0$. Since $Q$ is positive definite, we have that 
\[
(I_n-BB^\dagger)(A-(J-R)Q)=0 
\quad \iff \quad 
(I_n-BB^\dagger)(AQ^{-1}-J+R)=0. 
\] 
Let $P=Q^{-1}$, and define 
\begin{equation}\label{eq:algo2ab}
\tilde{\mu} = \inf_{J,R,Q\in \R^{n,n}, J^T=-J,R\succeq 0,P\succ 0}{\left \|(I_n-BB^\dagger)(AP-J+R)\right \|}.
\end{equation}
We have that $\mu = 0$ if and only if $\tilde{\mu} = 0$. 
Note that there is a scaling degree of freedom between $(J,R)$ and $P$ since $(\alpha J, \alpha R)$ and $P/\alpha$ for any $\alpha > 0$ leads to an equivalent solution. Hence, we may assume $P \succ I_n$. 
Note also that the feasible set in~\eqref{eq:algo2ab} is neither open (due to constraint $R\succeq 0$) nor closed (due to
constraint $P\succ 0$) and replacing the feasible set in~\eqref{eq:algo2ab} by its closure does
not change the value of the infimum in~\eqref{eq:algo2ab}, hence 
\begin{equation}\label{eq:algo3ab}
\tilde{\mu} = \inf_{J,R,Q\in \R^{n,n}, J^T=-J,R\succeq 0,P\succeq I_n}{\left \|(I_n-BB^\dagger)(AP-J+R)\right \|}.
\end{equation}
Thus checking feasibility is equivalent to check that the value of $\tilde{\mu}$ in~\eqref{eq:algo3ab}
is zero or not. 
The problem~\eqref{eq:algo3ab} is convex. More precisely, it is a semidefinite program (SDP) which can be solved efficiently with dedicated solvers. 

\begin{remark}{\rm
Recall the classical result that for any controllable pair $(A,B)$, there exists $K$
such that $A-BK$ is stable~\cite{TreSAH12}. 
However, if $(A,B)$ is not controllable, one cannot conclude that $\mathcal{K}_R(A,B) = \emptyset$.   
Note that, as opposed to previous works in the literature~\cite{Per12}, no assumption was made on the pair $(A,B)$ while obtaining our reformulation~\eqref{eq:algo3ab}. 
Therefore~\eqref{eq:algo3ab} provides a general way to check the existence of static feedback and works even when $(A,B)$ is not controllable. 
}
\end{remark}

\begin{remark}\label{rem:smallsize}{\rm
Note that one can use Theorem~\ref{thm:equitemp} to obtain an equivalent optimization problem with less variables (depending on the rank of $B$). Defining 
\begin{eqnarray*} 
&\nu:=\inf_{ J_{22}, R_{22} \in \R^{n-k,n-k}, J_{21} \in \R^{n-k,k}, Q\in \R^{n,n}}{\left\|\mat{cc}\hat A_{21}& \hat A_{22}\rix- \mat{cc} J_{21}-R_{21}&  J_{22}- R_{22}\rix Q\right\|}\\
&\text{such~ that}\quad Q\succ 0,~ J_{22}^T=- J_{22}, ~ R_{22}\succeq 0,~
\text{null}( R_{22})\subseteq \text{null}( R_{21}^T),  \nonumber
\end{eqnarray*}
we have $\mathcal K_R(A,B) \neq \emptyset$ if and only if $\nu=0$. Note that if $\hat R_{22}\succ 0$, then the condition $\text{null}(\hat R_{22})\subseteq \text{null}(\hat R_{21}^T)$
is always met. Therefore dropping this condition does not make any difference in our algorithm as the set of positive definite matrices is dense in the set of of positive semidefinite matrices. 
Therefore an equivalent reformulation of~\eqref{eq:algo3ab} is given by 
\begin{eqnarray*} 
&\nu=\inf_{ J_{22}, R_{22} \in \R^{n-k,n-k}, J_{21} \in \R^{n-k,k}, P\in \R^{n,n}}{\left\|\mat{cc}\hat A_{21}& \hat A_{22}\rix P- \mat{cc} J_{21}-R_{21}&  J_{22}- R_{22}\rix \right\|}\\
&\text{such~ that}\quad P\succeq I_n,~ J_{22}^T=- J_{22}, ~ R_{22}\succeq 0. \nonumber
\end{eqnarray*}
%
}
\end{remark}

\subsubsection{Optimization problem}

Suppose that the SSF problem is feasible, that is, $\mathcal K_R(A,B)\neq \emptyset$, and we want to solve the
minimization problem
\begin{equation}\label{eq:min1}
\inf_{K \in \mathcal K_R(A,B)} {\|K\|}. 
\end{equation}
In view of Theorem~\ref{thm:completechar} and Corollary~\ref{cor:1}, we can equivalently write~\eqref{eq:min1} as
\begin{eqnarray}  \label{eq:formulationP}
\inf_{(J,R,Q)\in \mathbb D \mathbb H_\succ^n}\, \inf_{K \in \widetilde{\mathcal K}_R(J,R,Q)} {\|K\|} 
= \inf_{(J,R,Q)\in \mathbb D \mathbb H_\succ^n,\, (I_n-BB^\dagger)(A-(J-R)Q)=0} {\|B^\dagger(A-(J-R)Q)\|}.  
\end{eqnarray}

In view of the formulation~\eqref{eq:formulationP}, a simple algorithm that can be used is a block coordinate descent (BCD) method: optimize alternatively over variables $(J,R)$ for $Q$ fixed, and $Q$ for $(J,R)$ fixed. In fact, the subproblems are SDPs hence can be solved efficiently. 
However, we have observed in practice that BCD seems to get stuck at saddle points; see Appendix~\ref{appA} for numerical results.   
For this reason, we have developed another algorithm that is based on sequential semidefinite programming (SSDP). 
As before, let us denote $P = Q^{-1}$, and reformulate~\eqref{eq:formulationP} as
\begin{equation}\label{eq:algo1}
\inf_{J,R,P\in \R^{n,n}, J^T=-J,R\succeq 0,P\succ 0, (I_n-BB^\dagger)(AP-(J-R))=0}
{\left \| B^\dagger ( A  - (J-R)  P^{-1}) \right \|}.
\end{equation}
Note that the feasible set is convex, with linear matrix inequalities (LMIs) and linear constraints. 
Given an initial solution $(J,R,P)$, 
we look for $(\Delta J,\Delta R, \Delta P)$ such that
$(J+\Delta J,R+\Delta R, P+\Delta P)$ is a better solution than $(J,R,P)$. 
To do so, we linearize the term $( A - (J+\Delta J-(R+\Delta R))  (P+\Delta P)^{-1})$ by using 
\[
(P+\Delta P)^{-1} \approx P^{-1} - P^{-1} \Delta P P^{-1}, 
\]
and removing the non-linear terms appearing in the product of the two components, that is, we use the following approximation: 
\begin{eqnarray*}
 A - (J+\Delta J-(R+\Delta R))  (P+\Delta P)^{-1}
\approx
 A - (J+\Delta J-(R+\Delta R)) P^{-1} +(J-R)  P^{-1} \Delta P P^{-1}.
\end{eqnarray*}
This results in the following optimization problem
\begin{align}
\inf_{\Delta J, \Delta R, \Delta P\in \R^{n,n}} &
{\left \|
B^\dagger \left(
A - (J+\Delta J)P^{-1}+(R+\Delta R) P^{-1} +(J-R)  P^{-1} \Delta P P^{-1}
\right)
\right \|}  \nonumber  \\
  \quad  \text{ such that }
& \quad
\Delta J^T=- \Delta J, R+\Delta R\succeq 0,P+\Delta P\succ 0, \label{eq:algo2}  \\
&  \quad
( I - B B^{\dagger} ) ( A \Delta P - (\Delta J - \Delta R) ) = 0 ,
\nonumber \\
&  \quad
\|\Delta J\| \leq \epsilon \| J\|,
\|\Delta R\| \leq \epsilon \| R\|,
\|\Delta P\| \leq \epsilon \| P\|. \nonumber
\end{align} 
Similar to a trust-region method, the value of $\epsilon$ is updated in the curse of the algorithm. As long as
the error of $(J+\Delta J,R+\Delta R,P+\Delta P)$ is larger than that of $(J,R,P)$, $\epsilon$ is decreased. For the next step, $\epsilon$ is increased to allow a larger trust-region radius.

\algsetup{indent=2em}
\begin{algorithm}[ht!]
\caption{Stabilizing a system pair $(A,B)$} \label{algo:stabpair}
\begin{algorithmic}[1]
\REQUIRE The $n$-by-$n$ matrix $A$, 
the $n$-by-$m$ matrix $B$, 
choice of a norm,  
step-size bound $0 < \underline{\epsilon} \ll 1$. 

\ENSURE An approximate solution $K$ 
to $\min_{K} {\|K\|}$ such that $A-BK$ is stable. \vspace{0cm}

\STATE \emph{\% Initialization phase} 

\STATE Initialize $\epsilon = 1$. 

\STATE Initialize $(J,R,P)$ as the optimal solution to~\eqref{eq:algo3ab}.

\STATE \emph{\% Optimization phase} 

\STATE Define $F(J,R,P) = {\left \| B^\dagger ( A  - (J-R)  P^{-1}) \right \|}$. 

\WHILE{some stopping criterion is met, or a maximum number of iterations is reached} 

\STATE Solve~\eqref{eq:algo2} to obtain $(\Delta J, \Delta R, \Delta P)$. 

\WHILE { 
$F(J+\Delta J,R+\Delta R,P+\Delta P) \geq F(J,R,P)$ 
and $\epsilon > \underline{\epsilon}$ 
}

\STATE Reduce $\epsilon$.

\STATE Solve~\eqref{eq:algo2} to obtain $(\Delta J, \Delta R, \Delta P)$.

\ENDWHILE

\STATE Set $(J,R,P) = (J+\Delta J,R+\Delta R,P+\Delta P)$

\STATE Increase $\epsilon$.

\ENDWHILE

\STATE Return $K = B^\dagger ( A  - (J-R)  P^{-1})$. 

\end{algorithmic} 
\end{algorithm}

\subsection{ Stabilization of a system triplet $(A,B,C)$ }

In this section, we focus on stabilizing a system triplet $(A,B,C)$, that is, finding $K$ such that $A-BKC$ is stable. We propose Algorithm~\ref{algo:stabtriplet} which, as for  Algorithm~\ref{algo:stabpair}, consists in two mains steps described in the next subsections.

\subsubsection{Feasibility problem}

In view of Theorem~\ref{thm:simulcon}, $\mathcal K(A,B,C) \neq \emptyset$ if and only if 
there exists $(J,R,Q)\in \mathbb D\mathbb H_\succ^n$ such that 
\[
(I_n-BB^\dagger)(A-(J-R)Q)=0\quad \text{and}\quad (A-(J-R)Q)(C^\dagger C-I_n)=0.
\] 
Finding a feasible solution can be done by considering the following optimization problem 
\begin{equation}\label{eq:refabc1}
\rho:=\inf_{J,R,Q\in \R^{n,n}, J^T=-J,R\succeq 0,Q\succ 0}{\left \|(I_n-BB^\dagger)(A-(J-R)Q)\right \|}
+ {\left \|(A-(J-R)Q)(C^\dagger C-I_n)\right \|}, 
\end{equation}
since $\mathcal K(A,B,C) \neq \emptyset$ if and only if $\rho=0$.  
Unlike~\eqref{eq:algo3ab},
a change of variable as $P=Q^{-1}$ in~\eqref{eq:refabc1} will not result in a convex optimization problem
due to the second term in~\eqref{eq:refabc1}. 
This was expected since the stabilization of a matrix triplet is hard; see Section~\ref{sec:intero}. 
Therefore to solve~\eqref{eq:refabc1},
we apply SSDP, as done for~\eqref{eq:algo1}, by solving, at each iteration, the following optimization problem: 
\begin{align} 
\inf_{\Delta J, \Delta R, \Delta P\in \R^{n,n}} &
{\left \|
(I_n-B B^\dagger) \left(
A - (J+\Delta J)P^{-1}+(R+\Delta R) P^{-1} +(J-R)  P^{-1} \Delta P P^{-1}
\right)
\right \|}  \nonumber  \\
& \quad + {\left \|
 \left(
A - (J+\Delta J)P^{-1}+(R+\Delta R) P^{-1} +(J-R)  P^{-1} \Delta P P^{-1}
\right)(C^\dagger C- I_n)
\right \|} \nonumber \\
  \quad  \text{ such that }
& \quad
\Delta J^T=- \Delta J, R+\Delta R\succeq 0,P+\Delta P\succ 0, \label{eq:tripletinit} \\
&  \quad
\|\Delta J\| \leq \epsilon \|\Delta J\|,
\|\Delta R\| \leq \epsilon \|\Delta R\|,
\|\Delta P\| \leq \epsilon \|\Delta P\|. \nonumber
\end{align}
Algorithm~\ref{algo:stabtriplet} (steps 1-16) summarizes this approach.

\subsubsection{Optimization problem}

In view of Theorem~\ref{thm:completecharABC}, we want to minimize the norm of feasible feedback $K$ by solving 
\begin{align}  
\inf_{(J,R,Q)\in \mathbb D \mathbb H_\succ^n} & \quad {\left\|B^\dagger(A-(J-R)Q)C^\dagger\right\|}, \nonumber \\ 
 \text{such that} & \quad  
 (I_n-BB^\dagger)(A-(J-R)Q)=0,  \text{ and }   \label{eq:formulationPabc} \\
 & \quad (A-(J-R)Q)(C^\dagger C-I_n)=0. \nonumber
\end{align}
To solve~\eqref{eq:formulationPabc}, we cannot use SSDP because the linear constraints cannot be linearized exactly (we would obtain an infeasible solution after one step). 
Therefore, in this case, we resort to BCD: alternatively solve \eqref{eq:formulationPabc} for $(J,R)$ with $Q$ fixed, and then for $Q$ with $(J,R)$ fixed; 
see steps 18-22 of Algorithm~\ref{algo:stabtriplet}.

\algsetup{indent=2em}
\begin{algorithm}[ht!]
\caption{Stabilizing a system triplet $(A,B,C)$} \label{algo:stabtriplet}
\begin{algorithmic}[1]
\REQUIRE The $n$-by-$n$ matrix $A$, 
the $n$-by-$m$ matrix $B$, 
the $p$-by-$n$ matrix $C$,  
choice of a norm, 
accuracy $0 < \delta \ll 1$, 
step-size bound $0 < \underline{\epsilon} \ll 1$, an initialization strategy (identity, random, ABI or ACI). 

\ENSURE If it succeeds, an approximate solution $K$ 
to $\min_{K} {\|K\|}$ such that $A-BKC$ is stable. \vspace{0.2cm}

\STATE \emph{\% Initialization phase} 

\STATE Initialize $(J,R,P)$ using the initialization strategy; see Section~\ref{init}. 

\STATE Initialize $\epsilon = 1$. 

\STATE Define $G(J,R,P) = 
\|(I_n-BB^\dagger)(A-(J-R)Q) \|
+ \|(A-(J-R)Q)(C^\dagger C-I_n) \|$. 

\WHILE{$G(J+\Delta J,R+\Delta R,P+\Delta P) \geq \delta$, or a maximum number of iterations is reached} 

\STATE Solve~\eqref{eq:tripletinit} to obtain $(\Delta J, \Delta R, \Delta P)$. 

\WHILE { 
$G(J+\Delta J,R+\Delta R,P+\Delta P) \geq G(J,R,P)$ 
and $\epsilon > \underline{\epsilon}$ 
}

\STATE Reduce $\epsilon$.

\STATE Solve~\eqref{eq:tripletinit} to obtain $(\Delta J, \Delta R, \Delta P)$.

\ENDWHILE

\STATE Set $(J,R,P) = (J+\Delta J,R+\Delta R,P+\Delta P)$

\STATE Increase $\epsilon$.

\ENDWHILE

\IF{$G(J,R,P) \geq \delta$}

\STATE The algorithm failed to find a feasible solution, STOP.  

\ENDIF

\STATE \emph{\% Optimization phase} 

\STATE $Q = P^{-1}$. 

\WHILE{some stopping criterion is met, or a maximum number of iterations is reached} 

\STATE Set $(J,R)$ as the optimal solution to~\eqref{eq:formulationPabc} for $Q$ fixed.  

\STATE Set $Q$ as the optimal solution to~\eqref{eq:formulationPabc} for $(J,R)$ fixed.   

\ENDWHILE

\STATE Return 
$K = B^\dagger ( A  - (J-R)  Q) C^\dagger$. 

\end{algorithmic} 
\end{algorithm}

\subsubsection{Initialization} \label{init}

Algorithm~\ref{algo:stabtriplet} needs to be initialized with some matrices $(J,R,P)$. 
We propose  four different initializations. In all cases, we choose $P$ and then set $(J,R)$ as an optimal solution of~\eqref{eq:refabc1} with $Q=P^{-1}$.  
\begin{enumerate}

\item Identity. We simply pick $P=I_n$. 

\item Random. We first generate the $n$-by-$n$ matrix $R$ where each entry is drawn using the normal distribution of mean 0 and standard deviation 1 (\texttt{randn(n,n)} in Matlab) and then set $P = (RR^T)^{1/2}$. (We use the square root so that $P$ has a smaller condition number.) 

\item ABI. We take $P$ as the optimal solution of~\eqref{eq:algo3ab}. This means that we choose $P$ such that there exists $J$ and $R$ where $K = B^{\dagger}(A-(J-R)P^{-1})$ stabilizes the triplet $(A,B,I_n)$ hence the name `ABI'. 

\item AIC. This is similar to ABI except that we choose $P$ such that there exists $J$ and $R$ where $K = (A-(J-R)P^{-1})C^{\dagger}$ stabilizes the triplet $(A,I_n,C)$ hence the name `AIC'.   

\end{enumerate}

\subsection{Extension to $\Omega$-stabilization}

Our approach to tackle the SOF and SSF problems can be extended to find feedbacks that make the system $\Omega$-stable. 
A matrix is said to be $\Omega$-stable if its eigenvalues belong to the set $\Omega \subset \mathbb{C}$.  
In~\cite{ChoGS19a}, a parametrization of $\Omega$-stable matrices was obtained
in terms of DH matrices that satisfy additional LMI constraints, where $\Omega$ is the intersection of specific regions in the complex plane; namely conic sectors, vertical strips, and disks. 
For example, one may want the real parts of the eigenvalues of $A-BKC$ to be strictly smaller than some given negative value so that $A-BK$ is robustly stable (see also Remark~\ref{robuststab} below). 
Using these parametrizations, the results obtained in Sections~\ref{sec:pair} and~\ref{sec:triplet} can
be directly extended to obtain a characterization of static stabilizing feedbacks that guarantee 
$\Omega$-stability in terms of DH matrices. 
The corresponding
optimization problems~\eqref{eq:algo2} and~\eqref{eq:tripletinit} 
would be subject to additional LMIs on the variables $J$, $R$ and $Q$
depending on the $\Omega$ region; see~\cite[Theorems 1-3]{ChoGS19a}.

\begin{remark} \label{robuststab} {\rm
Note that with the current version of the code, 
replacing $A$ with $A + \rho I$ allows to compute a feedback matrix that makes the real parts of the eigenvalues of $A-BKC$ smaller than $-\rho$. 
In fact, the real parts of the eigenvalues of $A-BKC$ are smaller than $-\rho$ if and only if 
the real parts of the eigenvalues of $A+\rho I-BKC$ are smaller than 0.} 
\end{remark}

\section{Numerical experiments} \label{sec:numexp}

In this section, we run our algorithms on the data sets from the library~\cite{leibfritz2004compleib}. 
To solve the convex optimization subproblems involving LMIs, we use the interior point method SDPT3 (version 4.0)~\cite{toh1999sdpt3,tutuncu2003solving} with CVX as a modeling system~\cite{cvx,gb08}. 
Our code is available from \url{https://sites.google.com/site/nicolasgillis/code} and the numerical examples presented below can be directly run from this online code.
All tests are preformed using Matlab
R2015a on a laptop Intel CORE i7-7500U CPU @2.7GHz 24Go RAM. 
It has to be noted that our algorithms are very flexible when it comes to the choice of the norm to be minimized. In fact, CVX can essentially handle any norm. However, we present results for the $\ell_2$ norm which is standard in the literature.

We focus in this section on the SOF problem, which is more challenging. Numerical results for the stabilization of matrix pairs $(A,B)$ are reported in Appendix~\ref{appA}. 
We compare Algorithm~\ref{algo:stabtriplet} with the solutions computed by two state-of-the-art algorithms: 
\begin{itemize}

\item The algorithm proposed in~\cite{Per16} is a randomized approximation algorithm. It works in to phases, similarly as Algorithm~\ref{algo:stabtriplet}. 
In the first phase (RS-PHASE-I, RS stands for Ray-Shooting), the algorithm looks for a feasible solution and, in the second phase (RS-PHASE-II), it improves the solution by minimizing its $\ell_2$-norm while remaining feasible. 
Initially, we tried to reproduce the results in~\cite{Per16} but this is impossible due to the randomized part of the algorithm. 
Moreover, there are several parameters of the algorithm that need to be fine-tuned to obtain good solutions, and we were not able to produce solutions as good as those presented in~\cite{Per16}. This is the reason why we prefer to report the results from~\cite{Per16}. 
Unfortunately, 
the author only provided extensive numerical results for the RS-PHASE-I of his algorithm~\cite[Table 8]{Per16}, and only 4 solutions obtained after RS-PHASE-II. 
Hence, we only compare to these solutions in this paper. 


\item The algorithm proposed in
\cite{H2hifoo} uses the HIFOO non-linear optimization Toolbox; see also~\cite{gumussoy2009multiobjective}.  
The algorithm, which we will refer to as HIFOO, relies on random initialization so, for the same reason as above, it is not possible to reproduce their results. 
  
\end{itemize}

For Algorithm~\ref{algo:stabtriplet}, we will use the four initializations presented in Section~\ref{init}. 
For random initialization, 
we report the best result out of 10 initializations.  
We use the $\ell_2$ norm as in~\cite{Per16, H2hifoo}, 
and the parameters $\underline{\epsilon} = \delta = 10^{-9}$; while when $\epsilon$ is decreased (resp.\@ increased), it is divided  (resp.\@ multiplied) by two (steps 8 and 12 of Algorithm~\ref{algo:stabtriplet}).  For both the initialization and optimization phases, we limit the number of iterations to 100. 
The initialization phase is stopped if the error is below $10^{-9}$. 
The optimization phase is stopped if the error is not decreased by at least $10^{-4}$ between two iterations. \\

For the stabilization of matrix pairs, we report the solutions found by Algorithm~\ref{algo:stabpair} in the  Appendix~\ref{appA}, 
and all the results can be rerun from our code available online.

\subsection{The four examples from \cite[Section 5.1]{Per16}}

Let us first compare our algorithm with 
RS-PHASE-II on the examples presented in~\cite[Section 5.1]{Per16}; see Table~\ref{tab:statoutres1}.  
The solution reported for Algorithm~\ref{algo:stabtriplet} is the best one obtained out of the four initializations (namely, identity for AC7 and AC8, random for HE1, and ABI for ROC7); 
see Tables~\ref{tab:statoutres} and~\ref{tab:statoutres2} 
for the results of the other initializations. 
\begin{center}  
 \begin{table}[h!] 
 \begin{center}
\caption{Comparison of the stabilizing feedback matrices $K$, 
their norm ${\|K\|}_2$ (the lowest norm is highlighted in bold), $\max_i \real{\lambda_i(A-BKC)}$, and the computational time to obtain them.} \label{tab:statoutres1} 
 \begin{tabular}{|c||c|c|} 
 \hline   
   & Algorithm~\ref{algo:stabtriplet} & RS-PHASE-II \\ \hline  
 AC7 & $K$ = [-0.2638   -0.2506] & $K$ = [-0.5963 -0.0669]\\ 
     &    ${\|K\|}_2=    \mathbf{0.36}$         & ${\|K\|}_2=0.60$ \\ 
     &    $\max_i \real{ \lambda_i(A-BKC)}= -0.0004$     & 
     $\max_i \real{ \lambda_i(A-BKC)}= -0.0056$  \\ 
& 7.4 s. &    0.17 s.   
     \\  \hline  
 AC8 & $K$ = [-0.0112    0.0049    0.0129   -0.0008   -0.0128] & $K$ = [-2.0783 0.2537 0.9202 -0.0382 -0.8338]\\ 
     &        ${\|K\|}_2 = \mathbf{0.02}$         & ${\|K\|}_2=  2.43$ \\ 
     &        $\max_i \real{ \lambda_i(A-BKC)}=-0.0257$     & $\max_i \real{ \lambda_i(A-BKC)}=-0.186$ \\ 
& 7.5 s. &   0.03 s.   \\   \hline 
HE1 
& $K$ = [0.1221;   -0.3974] 
& $K$ = [-0.0691;   -0.4227]\\ 
     &    ${\|K\|}_2 =  0.42$         
     & ${\|K\|}_2 = \mathbf{0.35}$ \\ 
     &    $\max_i \real{ \lambda_i(A-BKC)}= -0.0435$     & $\max_i \real{ \lambda_i(A-BKC)}= -0.0196$ \\ 
& 61 s. &   0.03 s.    \\  \hline     
     ROC7 & 
$K = 10^{-5}\left[ \begin{array}{ccc}      
     0.0148 &  \text{-}0.0016  & \text{-}0.0123\\ 
    0.6947  &  0.0113  &  0.2792  \\  
    \end{array} \right]$
& $K = \left[ \begin{array}{ccc} 
0.5659 & 0.0379 & \text{-}0.1363 \\
 \text{-}1.3274 & \text{-}0.9144 & 0.4762 \end{array} \right]$ \\ 
     &    ${\|K\|}_2 =  \mathbf{7.5 \, 10^{-6}}$         
     & ${\|K\|}_2 = 1.76$ \\ 
     &    $\max_i \real{ \lambda_i(A-BKC)} = 1.5 \, 10^{-7}$     
     & $\max_i \real{ \lambda_i(A-BKC)}= -0.0238$ \\ 
& 13 s. &   0.09 s.   
\\  \hline 
\end{tabular} 
 \end{center} 
 \end{table} 
 \end{center}

For these four numerical experiments, Algorithm~\ref{algo:stabtriplet} provides better solutions than RS-PHASE-II in 3 out of the 4 cases. 
In particular, for AC8 and ROC7, the improvement is significant; from ${\|K\|}_2 = 2.43$ to  ${\|K\|}_2 = 0.02$ for AC8, and 
from ${\|K\|}_2 = 1.76$ to ${\|K\|}_2 = 7.5 \, 10^{-6}$ for\footnote{
Note that the real part of one eigenvalue is positive, namely $1.5 \, 10^{-7}$. 
One way to ensure that this does not happen is to add $\rho I$ to the input matrix; see Remark~\ref{robuststab}. 
Another way is to use a lower bound $\delta$ on the eigenvalues of $R$ and $Q$ which can be easily done in our code. 
For example, using $\delta = 10^{-6}$, we obtain 
$K = 10^{-3} [1.4 \, 0  \, 0; 0  \, 0  \, 0.11]$ with $||K||_2 = 1.4 \, 10^{-3}$ 
with  $\max_i \real{ \lambda_i(A-BKC)} = -1.3 \, 10^{-17}$. 
} 
ROC7. 
When RS-PHASE-II obtains the best solution (HE1), 
the difference in norm is not significant: 0.42 for vs.\@ 0.35. In summary, RE-PHASE-II is significantly faster than Algorithm~\ref{algo:stabtriplet}, but generates in general solutions with larger norm. 
These observations will be confirmed in the next section.

 \subsection{Extensive numerical results}

In this section, we report numerical results for all the systems tested 
in~\cite[Table 8]{Per16} 
and~\cite[Table 1]{H2hifoo} of size $n \leq 20$; 
see 
 Tables~\ref{tab:statoutres}  and~\ref{tab:statoutres2}. 
Note that some systems in~\cite[Table 8]{Per16}  were not tested in~\cite[Table 1]{H2hifoo}, and vice versa, in which case we do not report any solution. 
Moreover, \cite[Table 8]{Per16} only reports the error of the initialization phase (RS-PHASE-I) hence we only report this result. 
Since~\cite[Table 8]{Per16} 
and~\cite[Table 1]{H2hifoo} minimize the $\ell_2$ norm of $K$, we run Algorithm~\ref{algo:stabtriplet} using this norm.


In terms of solution quality, Algorithm~\ref{algo:stabtriplet} outperforms the two other approaches, providing the best solution in 40 out of the 57 cases (RS-PHASE-I does for 4 out of 50, HIFOO does for 11 out of 45). Moreover, in many cases, the $\ell_2$ norm of the stabilizing feedback matrix is much smaller.  

In terms of finding feasible solutions, the three algorithms are comparable: Algorithm~\ref{algo:stabtriplet} only fails 5 times out of 57, 
RS-PHASE-I 4 times out of 50, and 
HIFOO 4 times out of 45. 
All algorithms are sometimes able to find a feasible solution while the other fail. For example, RS-PHASE-I fails on AC18 and WEC1 while the two other algorithms succeed; 
HIFOO fails on HE6, HE7, NN13 and NN14; and Algorithm~\ref{algo:stabtriplet} fails on NN3-6-7-9 and ROC2-3. 

Note that no algorithm is able to return a solution for NN3 and ROC3 hence these systems might not be stabilizable.

\paragraph{Sensitivity to initialization} As expected, Algorithm~\ref{algo:stabtriplet} is rather sensitive to initialization. In terms of solution quality, ABI performs best (23 out of 57 best solution found). In terms of finding feasible solutions, Random performs best (only 5 failure out of 57) but used 10 initializations. 
In fact, we were curious to see whether the random initialization would be able to find feasible solutions if allowed more trials. We have rerun the same experiment with 100 initializations, and it was able to find a feasible solution for NN7 with $||K||_2=98.45$, and for ROC2 with $||K||_2 = 3.31$; both being better solutions than the ones found by RS-PHASE-I and HIFOO.  Hence, except for NN6, Algorithm~\ref{algo:stabtriplet} with random initialization was able to find feasible solutions whenever RS-PHASE-I and HIFOO did.

\paragraph{Computational time}

RS-PHASE-I and -II require less than 1 seconds in all the examples shown on
Tables~\ref{tab:statoutres} and~\ref{tab:statoutres2}.   
Although it is not reported here, in terms of computational time, HIFOO is in general faster than Algorithm~\ref{algo:stabtriplet} but significantly slower than RS-PHASE-I and -II. 
 For example, on NN9 ($n = 5$, $m = 3$ and $p = 2$), 
 HIFOO requires about 6 seconds, 
 while Algorithm~\ref{algo:stabtriplet} requires about 40 seconds on average. 
 Note that the random initialization is slower than the other ones as it runs Algorithm~\ref{algo:stabtriplet} 10 times.

\begin{center}  
 \begin{table}[h!] 
 \begin{center} 
\caption{
Norms ${\|K\|}_2$ of the feedback matrices $K$  (and, in brackets, the computational time in seconds) generated by Algorithm~\ref{algo:stabtriplet} using different initializations (second to fifth column) on problems from the COMPLeIB library. 
The last two columns report the results of the algorithm of 
Peretz (RS-PHASE-I=RS-P-I)~\cite[Table~8]{Per16} 
and 
H2-HIFOO~\cite[Table~1]{H2hifoo}. 
An empty box means that the result is not available, 
$\infty$ means that the algorithm did not find a feasible feedback, 
e-x means $10^{-x}$. 
The solutions with error at most 0.01 away from the best solution found are highlighted in bold.  
\label{tab:statoutres} 
} 

 \begin{tabular}{|c||cccc|c|c|} 
 \hline  
   & \multicolumn{4}{c|}{Algorithm~\ref{algo:stabtriplet}}   & RS-P.-I & HIFOO 
   \\ 
   &  Identity & Random & ABI  & AIC  & \cite{Per16}  &  \cite{H2hifoo}
 \\ \hline 
 AC1 &   \textbf{1.63e-9} (1.5)   &  \textbf{3.65e-9} (2.9)   &  \textbf{1.63e-9} (1.1)   &  \textbf{1.63e-9} (1.1)   &  1.62   &  \textbf{1.81e-15}    \\ \hline 
 AC2 &   \textbf{1.63e-9} (1.3)   &  \textbf{3.65e-9} (3.3)   &  \textbf{1.63e-9} (1.1)   &  \textbf{1.63e-9} (1.1)   &  1.62   &  4.91e-2    \\ \hline 
 AC4 &   7.48e-2 (2.1)   &  6.69e-2 (34)   &  7.27e-2 (8.0)   &  \textbf{5.21e-2} (15)   &  0.77   &  \\ \hline 
 AC5 &   1358.60 (16)   &  1344.56 (718)   &  2282.84 (28)   &  $\infty$ (21)   &  \textbf{1.41}   &  1340.00   \\ \hline 
 AC6 &   \textbf{4.05e-12} (1.1)   &  \textbf{4.05e-12} (3.3)   &  \textbf{4.05e-12} (1.1)   &  \textbf{4.05e-12} (1.1)   &  1.20   &  \\ \hline 
 AC7 &   \textbf{0.36} (6.5)   &  0.42 (68)   &  0.76 (21)   &  0.41 (59)   &  1.02   &  \\ \hline 
 AC8 &   \textbf{2.19e-2} (6.6)   &  0.12 (142)   &  1.20 (19)   &  0.50 (34)   &  2.43   &  \\ \hline 
 AC9 &   0.66 (7.9)   &  2.34 (137)   &  \textbf{1.31e-3} (36)   &  0.26 (44)   &  1.92   &  1.41   \\ \hline 
 AC11 &   5.98 (11)   &  \textbf{0.17} (510)   &  1.30 (20)   &  0.92 (56)   &  2.41   &  3.64   \\ \hline 
 AC12 &   1.79 (71)   &  1.19 (540)   &  1.00 (28)   &  0.90 (35)   &  3398.70   &  \textbf{5.00e-5}    \\ \hline 
 AC15 &   \textbf{6.34e-10} (0.9)   &  \textbf{6.34e-10} (2.6)   &  \textbf{6.34e-10} (0.9)   &  \textbf{6.34e-10} (0.9)   &  8.71   &  \\ \hline 
 AC18 &   1.60 (223)   &  \textbf{0.37} (1688)   &  0.55 (46)   &  1.26 (173)   &  $\infty$    &  18.60   \\ \hline 
 HE1 &   2.26 (1.8)   &  0.42 (61)   &  0.68 (51)   &  0.46 (71)   &  1.67   &  \textbf{8.57e-2}    \\ \hline 
 HE3 &   49.77 (26)   &  29.12 (589)   &  9.50 (136)   &  10.75 (140)   &  41.91   &  \textbf{0.81}   \\ \hline 
 HE4 &   45.65 (22)   &  13.01 (569)   &  \textbf{5.68} (155)   &  9.42 (127)   &  145.17   &  18.60   \\ \hline 
 HE5 &   $\infty$ (38)   &  33.39 (1139)   &  $\infty$ (180)   &  16.60 (38)   &  144.95   &  \textbf{1.59}   \\ \hline 
 HE6 &   4.41 (15)   &  \textbf{2.30} (660)   &  2.47 (54)   &  9.83 (96)   &  3.63   &  $\infty$    \\ \hline 
 HE7 &   4.41 (15)   &  3.36 (439)   &  \textbf{2.47} (55)   &  9.83 (95)   &  3.63   &  $\infty$    \\ \hline 
 REA1 &   1.06 (2.6)   &  \textbf{0.85} (69)   &  0.94 (18)   &  1.79 (18)   &  1.55   &  1.50   \\ \hline 
 REA2 &   1.73 (3.3)   &  \textbf{0.92} (45)   &  1.09 (15)   &  2.62 (16)   &  1.12   &  1.65   \\ \hline 
 REA3 &   \textbf{2.40e-3} (0.0)   &  \textbf{2.40e-3} (30)   &  \textbf{2.40e-3} (0.0)   &  \textbf{2.40e-3} (0.0)   &  14.84   &  9.91   \\ \hline 
 DIS2 &   6.18 (3.7)   &  4.94 (47)   &  8.59 (9.6)   &  3.24 (9.2)   &  &  \textbf{1.40}   \\ \hline 
 DIS4 &   0.44 (70)   &  0.36 (227)   &  \textbf{0.32} (24)   &  0.40 (24)   &  2.82   &  1.69   \\ \hline 
 DIS5 &   $\infty$ (32)   &  \textbf{416.81} (503)   &  $\infty$ (63)   &  $\infty$ (30)   &  461.97   &  1280.00   \\ \hline 
 WEC1 &   \textbf{4.30} (6.2)   &  25.98 (1365)   &  $\infty$ (233)   &  9.05 (37)   &  $\infty$    &  5.69   \\ \hline 
 PAS &   4.72e-3 (0.0)   &  4.72e-3 (13)   &  4.72e-3 (0.0)   &  4.72e-3 (0.0)   &  780.00   &  \textbf{1.97e-3}    \\ \hline 
 TF1 &   1.16 (8.8)   &  4.00 (100)   &  0.21 (19)   &  $\infty$ (126)   &  65.04   &  \textbf{0.14}   \\ \hline 
 TF2 &   5.40 (3.2)   &  6.31 (154)   &  \textbf{0.42} (19)   &  1.69 (16)   &  7.95   &  10.90   \\ \hline 
 TF3 &   $\infty$ (97)   &  47.13 (656)   &  0.28 (19)   &  1.64 (31)   &  151.57   &  \textbf{0.14}   \\ \hline 
 NN1 &   $\infty$ (16)   &  48.43 (177)   &  $\infty$ (43)   &  $\infty$ (39)   &  133.69   &  \textbf{35.00}   \\ \hline 
 NN2 &   \textbf{3.45e-9} (1.2)   &  \textbf{3.45e-9} (2.8)   &  \textbf{3.45e-9} (1.1)   &  \textbf{3.45e-9} (1.1)   &  1.35   &  1.54   \\ \hline 
 NN3 &   $\infty$ (31)   &  $\infty$ (264)   &  $\infty$ (37)   &  $\infty$ (12)   &  $\infty$    &  \\ \hline 
 NN5 &   $\infty$ (30)   &  \textbf{17.14} (556)   &  $\infty$ (73)   &  $\infty$ (114)   &  39.03   &  82.40   \\ \hline 
 NN6 &   $\infty$ (115)   &  $\infty$ (1446)   &  $\infty$ (188)   &  $\infty$ (264)   &  \textbf{110.73}   &  314.00   \\ \hline 
\end{tabular} 
 \end{center} 
 \end{table} 
 \end{center} 
 
 \begin{center}  
 \begin{table}[h!] 
 \begin{center} 
 \caption{ 
Continued from Table~\ref{tab:statoutres}. \label{tab:statoutres2}
} 
 \begin{tabular}{|c||cccc|c|c|} 
 \hline  
   & \multicolumn{4}{c|}{Algorithm~\ref{algo:stabtriplet}}   & RS-P.-I & HIFOO 
   \\ 
   &  Identity & Random & ABI  & AIC  & \cite{Per16}  &  \cite{H2hifoo}
 \\ \hline 
  NN7 &   $\infty$ (118)   &  $\infty$ (1243)   &  $\infty$ (187)   &  $\infty$ (264)   &  \textbf{71.53}   &  84.20   \\ \hline 
 NN9 &   $\infty$ (39)   &  \textbf{10.66} (250)   &  $\infty$ (41)   &  $\infty$ (65)   &  504.50   &  20.90   \\ \hline 
 NN12 &   32.67 (25)   &  18.23 (327)   &  $\infty$ (88)   &  $\infty$ (33)   &  27.05   &  \textbf{10.90}   \\ \hline 
 NN13 &   0.61 (2.7)   &  0.40 (47)   &  \textbf{7.05e-2} (29)   &  9.98e-2 (17)   &  1.94   &  $\infty$    \\ \hline 
 NN14 &   0.61 (2.6)   &  0.38 (51)   &  \textbf{7.05e-2} (29)   &  9.98e-2 (17)   &  1.47   &  $\infty$    \\ \hline 
 NN15 &   \textbf{4.71e-11} (1.0)   &  \textbf{4.70e-11} (2.7)   &  \textbf{4.71e-11} (1.0)   &  \textbf{4.71e-11} (1.0)   &  2.00   &  4.80e-2    \\ \hline 
 NN16 &   \textbf{1.52e-10} (1.1)   &  \textbf{1.83e-10} (3.1)   &  \textbf{1.52e-10} (1.0)   &  \textbf{1.52e-10} (1.0)   &  0.46   &  0.34   \\ \hline 
 NN17 &   $\infty$ (7.6)   &  53.23 (323)   &  1.86 (14)   &  \textbf{1.41} (11)   &  6.77   &  3.87   \\ \hline 
 HF2D10 &   1.23 (2.1)   &  0.30 (31)   &  0.31 (3.7)   &  \textbf{0.29} (13)   &  15.41   &  70600    \\ \hline 
 HF2D11 &   8.36 (2.1)   &  1.64 (32)   &  0.63 (3.6)   &  \textbf{0.58} (7.1)   &  44.02   &  85100    \\ \hline 
 HF2D14 &   7.48e-2 (18)   &  2.37e-2 (97)   &  \textbf{2.06e-2} (4.8)   &  \textbf{2.04e-2} (9.8)   &  &  373000    \\ \hline 
 HF2D15 &   0.91 (14)   &  0.26 (258)   &  \textbf{0.26} (14)   &  1.34 (103)   &  &  284000    \\ \hline 
 HF2D16 &   2.98e-2 (3.0)   &  1.49e-2 (93)   &  1.57e-2 (4.4)   &  \textbf{1.39e-2} (4.2)   &  &  284000    \\ \hline 
 HF2D17 &   0.10 (2.3)   &  \textbf{6.63e-2} (125)   &  6.81e-2 (12)   &  \textbf{6.59e-2} (11)   &  &  375000    \\ \hline 
 HF2D18 &   2.45e-2 (2.1)   &  1.49e-2 (25)   &  1.84e-2 (7.1)   &  \textbf{7.00e-3} (15)   &  &  24.30   \\ \hline 
 TMD &   0.28 (2.8)   &  0.30 (88)   &  \textbf{2.15e-3} (7.7)   &  119.22 (37)   &  1.07   &  1.32   \\ \hline 
 FS &   $\infty$ (35)   &  793.18 (382)   &  \textbf{145.34} (20)   &  1.26e+04 (38)   &  &  18300    \\ \hline 
 ROC1 &   \textbf{5.63e-6} (0.0)   &  \textbf{5.63e-6} (11)   &  \textbf{5.63e-6} (0.0)   &  \textbf{5.63e-6} (0.0)   &  180.14   &  \\ \hline 
 ROC2 &   $\infty$ (54)   &  $\infty$ (1471)   &  $\infty$ (105)   &  $\infty$ (185)   &  \textbf{152.94}   &  \\ \hline 
 ROC3 &   $\infty$ (41)   &  $\infty$ (642)   &  $\infty$ (72)   &  $\infty$ (107)   &  $\infty$    &  \\ \hline 
 ROC4 &   \textbf{2.58e-6} (1.2)   &  \textbf{2.58e-6} (22)   &  \textbf{2.58e-6} (1.1)   &  \textbf{2.58e-6} (1.1)   &  241.57   &  \\ \hline 
 ROC5 &   \textbf{2.39e-9} (1.1)   &  \textbf{1.33e-9} (3.1)   &  \textbf{2.39e-9} (1.1)   &  \textbf{2.39e-9} (1.1)   &  232.22   &  \\ \hline 
 ROC7 &   0.18 (3.3)   &  0.37 (98)   &  \textbf{7.49e-6} (9.2)   &  3.01e-3 (15)   &  2.32   &  \\ \hline 
 \hline  \# best  & 14 / 57 & 20 / 57 & 23 / 57 & 19 / 57 &  4 / 50 & 11 / 45 
 \\ \hline 
  &  
\multicolumn{4}{c|}{Globally: 40 / 57}
  &  &  
 \\ \hline 
 \# $\infty$ & 13 / 57 &  5 / 57 & 12 / 57 & 12 / 57 &  4 / 50 &  4 / 45 
 \\ \hline 
  &  
  \multicolumn{4}{c|}{Globally: 5 / 57}
  &  &  
 \\ \hline 
\end{tabular} 
 \end{center} 
 \end{table} 
 \end{center}

\newpage

\section{Conclusion} 

In this paper, we have proposed a new characterization of 
all the SSFs and SOFs of a given LTI system pair $(A,B)$ 
and system triplet $(A,B,C)$, respectively, in terms of DH matrices.
This allowed us to develop algorithms to compute  minimal-norm
SSFs for a system pair $(A,B)$ (Algorithm~\ref{algo:stabpair}) and minimal-norm SOFs for a system triplet $(A,B,C)$ (Algorithm~\ref{algo:stabtriplet}).
Comparing Algorithm~\ref{algo:stabtriplet} with the methods 
HIFOO~\cite{H2hifoo} and RS~\cite{Per16} on SOF problems, we found that RS performs better than  Algorithm~\ref{algo:stabtriplet} in terms of computational time. 
In terms of solution quality,  Algorithm~\ref{algo:stabtriplet} compares favourably with the two other methods, being able to obtain better solution in many cases. 
In terms of finding feasible solutions, the three methods perform similarly.  

Further work include the design of faster algorithms to solve our SDPs such as first-order methods. In fact, our algorithms  currently  rely on interior-point methods which do not scale well. 


\section*{Acknowledgements}

The authors are grateful to Yossi Peretz for sharing his code, and adressing our questions regarding his paper~\cite{Per16}.



\small

\bibliographystyle{spmpsci}
\bibliography{GilS}

\normalsize 

\newpage 

\appendix 

\section{Stabilizing matrix pairs $(A,B)$} \label{appA}

In this section, we report the $\ell_2$ norm of the solutions obtained by Algorithm~\ref{algo:stabpair} for the SSF problems corresponding to the same instance as in 
Tables~\ref{tab:statoutres} 
and~\ref{tab:statoutres2}.   
We minimize the $\ell_2$ norm of the feedback matrices, use $\underline{\epsilon} = 10^{-9}$, and update $\epsilon$ in the same way as in Section~\ref{sec:numexp}.  
We also report the error of the $\ell_2$ norm of the solution obtained by solving~\eqref{eq:algo3ab} (initialization phase of Algorithm~\ref{algo:stabpair}), and the error of the BCD algorithm that alternatively optimized $(J,R)$ for $Q$ fixed, and vice versa. 

As mentioned in Section~\ref{sec:algorithm}, 
SSDP performs significantly better than BCD in terms of solution quality.  
BCD provides a slightly better solution only in a few cases.  
In terms of computational time, BCD is faster as it solves subproblems with fewer variables.

\begin{center}  
 \begin{table}[h!] 
 \begin{center} 
\caption{Comparison of the $\ell_2$ norm of the stabilizing feedback matrices (and, in brackets, the computational time in seconds and the number of iterations) 
of BCD and SSDP for the SSF problem. 
The solutions with error at most 0.01\% away from the best solution found are highlighted in bold, 
e-x means $10^{-x}$ and e+x means $10^{x}$. 
\label{tab:ssf1} } 
 \begin{tabular}{|c||ccc|} 
 \hline  
 Data set $(n,m)$ & Init. & BCD & SSDP   
 \\ \hline  AC1 (5,3) &  \textbf{2.82e-14} (1.0) &  \textbf{2.82e-14} (0.0,  0) &  \textbf{2.82e-14} (0.0,  0)   \\ \hline 
 AC2 (5,3) &  \textbf{2.82e-14} (0.3) &  \textbf{2.82e-14} (0.0,  0) &  \textbf{2.82e-14} (0.0,  0)   \\ \hline 
 AC4 (4,1) &  1.29  (1.2) &  3.86e-1 (2.4,  4) &  \textbf{7.91e-2} (10.4, 16)   \\ \hline 
 AC5 (4,2) &  1.43e+3 (0.8) &  3.83e+2 (15.1, 23) &  \textbf{2.85e+2} (280.4, 200)   \\ \hline 
 AC6 (7,2) &  \textbf{1.82e-15} (0.3) &  \textbf{1.82e-15} (0.0,  0) &  \textbf{1.82e-15} (0.0,  0)   \\ \hline 
 AC7 (9,1) &  2.33  (1.4) &  2.09e-1 (2.8,  4) &  \textbf{7.64e-2} (43.4, 25)   \\ \hline 
 AC8 (9,1) &  1.62  (1.5) &  3.30e-1 (2.5,  4) &  \textbf{3.79e-3} (37.5, 29)   \\ \hline 
 AC9 (10,4) &  1.13e-2 (1.7) &  \textbf{3.06e-4} (7.0,  4) &  4.02e-4 (53.9, 11)   \\ \hline 
 AC11 (5,2) &  2.44  (0.7) &  1.49  (21.2, 23) &  \textbf{7.44e-1} (19.2, 13)   \\ \hline 
 AC12 (4,3) &  4.73  (0.7) &  \textbf{2.89 } (6.3,  8) &  4.00  (233.0, 200)   \\ \hline 
 AC15 (4,2) &  \textbf{2.33e-13} (0.3) &  \textbf{2.33e-13} (0.0,  0) &  \textbf{2.33e-13} (0.0,  0)   \\ \hline 
 AC18 (10,2) &  2.28e-1 (9.0) &  3.21e-2 (23.4, 13) &  \textbf{1.15e-2} (85.1, 21)   \\ \hline 
 HE1 (4,2) &  1.62e-1 (0.4) &  1.26e-1 (3.3,  5) &  \textbf{1.18e-1} (12.8, 12)   \\ \hline 
 HE3 (8,4) &  3.05  (1.9) &  8.62e-1 (45.7, 35) &  \textbf{7.06e-1} (205.5, 88)   \\ \hline 
 HE4 (8,4) &  1.45  (1.6) &  3.28e-1 (65.3, 61) &  \textbf{3.87e-2} (39.2, 16)   \\ \hline 
 HE5 (8,4) &  1.45  (1.3) &  3.28e-1 (65.9, 61) &  \textbf{3.87e-2} (38.2, 16)   \\ \hline 
 HE6 (20,4) &  1.45  (6.8) &  7.83e-1 (553.4, 70) &  \textbf{3.57e-2} (316.4, 19)   \\ \hline 
 HE7 (20,4) &  1.45  (6.8) &  7.83e-1 (555.7, 70) &  \textbf{3.57e-2} (317.1, 19)   \\ \hline 
 REA1 (4,2) &  9.01e-1 (0.5) &  3.70e-1 (4.4,  7) &  \textbf{3.17e-1} (21.7, 20)   \\ \hline 
 REA2 (4,2) &  9.14e-1 (0.5) &  3.75e-1 (5.8,  7) &  \textbf{3.21e-1} (26.8, 25)   \\ \hline 
 REA3 (12,1) &  6.57e+2 (2.3) &  6.12e+2 (3.7,  4) &  \textbf{1.05e-2} (159.7, 96)   \\ \hline 
 DIS2 (3,2) &  2.60  (0.4) &  1.41  (7.6, 12) &  \textbf{1.22 } (21.4, 23)   \\ \hline 
 DIS4 (6,4) &  7.19e-1 (0.5) &  4.07e-1 (16.3, 21) &  \textbf{3.20e-1} (21.7, 16)   \\ \hline 
 DIS5 (4,2) &  7.83e+2 (0.5) &  1.58e+2 (1.3,  2) &  \textbf{1.03e+2} (63.8, 39)   \\ \hline 
 WEC1 (10,3) &  3.27e+1 (7.3) &  4.59  (261.5, 106) &  \textbf{3.97 } (72.3, 33)   \\ \hline 
 PAS (5,1) &  \textbf{6.82e-3} (0.9) &  \textbf{6.82e-3} (0.0,  0) &  \textbf{6.82e-3} (2.9,  1)   \\ \hline 
\end{tabular} 
 \end{center} 
 \end{table} 
 \end{center} 
 
 \begin{center}  
 \begin{table}[h!] 
 \begin{center} 
\caption{Continued from Table~\ref{tab:ssf1}. \label{tab:ssf2}} 
 \begin{tabular}{|c||ccc|} 
 \hline  
 Data set $(n,m)$ & Init. & BCD & SSDP   
 \\ \hline 
  TF1 (7,2) &  2.92e+1 (1.5) &  3.97  (2.8,  4) &  \textbf{7.77e-3} (30.2, 20)   \\ \hline 
 TF2 (7,2) &  2.92e+1 (1.6) &  3.97  (2.8,  4) &  \textbf{7.77e-3} (31.9, 20)   \\ \hline 
 TF3 (7,2) &  2.92e+1 (1.6) &  3.97  (2.8,  4) &  \textbf{7.77e-3} (30.4, 20)   \\ \hline 
 NN1 (3,1) &  1.65e+1 (0.8) &  1.34e+1 (2.4,  4) &  \textbf{1.30e+1} (22.9, 27)   \\ \hline 
 NN2 (2,1) &  \textbf{0} (0.1) &  \textbf{0} (0.0,  0) &  \textbf{0} (0.0,  0)   \\ \hline 
 NN3 (4,1) &  6.33e+1 (0.8) &  4.31e+1 (0.6,  1) &  \textbf{1.81e+1} (7.0,  6)   \\ \hline 
 NN5 (7,1) &  4.19e+2 (1.3) &  1.69e+2 (2.9,  4) &  \textbf{3.01e+1} (179.3, 200)   \\ \hline 
 NN6 (9,1) &  3.48e+2 (1.8) &  3.12e+2 (0.7,  1) &  \textbf{6.04e+1} (98.3, 60)   \\ \hline 
 NN7 (9,1) &  3.48e+2 (1.6) &  3.12e+2 (0.7,  1) &  \textbf{6.04e+1} (98.4, 60)   \\ \hline 
 NN9 (5,3) &  5.19  (0.6) &  4.50  (2.2,  3) &  \textbf{3.32 } (66.4, 64)   \\ \hline 
  NN12 (6,2) &  2.45  (0.8) &  2.45  (3.3,  4) &  \textbf{1.41 } (118.7, 111)   \\ \hline 
 NN13 (6,2) &  8.64e-1 (0.5) &  8.34e-1 (2.6,  4) &  \textbf{3.19e-1} (37.9, 35)   \\ \hline 
 NN14 (6,2) &  8.64e-1 (0.5) &  8.34e-1 (2.6,  4) &  \textbf{3.19e-1} (38.5, 35)   \\ \hline 
  NN15 (3,2) &  \textbf{2.22e-12} (0.3) &  \textbf{2.22e-12} (1.3,  2) &  \textbf{2.22e-12} (8.2,  1)   \\ \hline 
 NN16 (8,4) &  \textbf{5.40e-15} (0.3) &  \textbf{5.40e-15} (0.0,  0) &  \textbf{5.40e-15} (0.0,  0)   \\ \hline 
 NN17 (3,2) &  4.38  (0.7) &  8.30e-1 (5.1,  8) &  \textbf{7.26e-1} (10.2, 14)   \\ \hline 
 HF2D10 (5,2) &  9.37e-3 (0.5) &  \textbf{9.16e-3} (2.1,  3) &  {9.19e-3} (1.5,  3)   \\ \hline 
 HF2D11 (5,2) &  1.55e-2 (0.5) &  \textbf{1.50e-2} (2.9,  4) &  \textbf{1.50e-2} (1.3,  3)   \\ \hline 
 HF2D14 (5,2) &  2.97e-2 (0.5) &  \textbf{2.35e-2} (2.8,  4) &  {2.36e-2} (2.6,  5)   \\ \hline 
 HF2D15 (5,2) &  1.98e-1 (0.6) &  \textbf{1.22e-1} (8.1, 12) &  \textbf{1.22e-1} (6.0,  5)   \\ \hline 
 HF2D16 (5,2) &  1.46e-2 (0.5) &  \textbf{1.40e-2} (2.7,  4) &  \textbf{1.40e-2} (1.3,  3)   \\ \hline 
 HF2D17 (5,2) &  6.83e-2 (0.5) &  \textbf{6.47e-2} (2.9,  4) &  \textbf{6.47e-2} (2.4,  5)   \\ \hline 
 HF2D18 (5,2) &  3.79e-2 (0.5) &  \textbf{3.59e-2} (1.3,  2) &  3.79e-2 (5.0,  2)   \\ \hline 
 TMD (6,2) &  4.73e-3 (1.2) &  \textbf{1.81e-4} (2.8,  4) &  2.22e-4 (4.9,  8)   \\ \hline 
 FS (5,1) &  5.28e+2 (2.2) &  5.28e+2 (0.0,  0) &  \textbf{3.04e+1} (6.1,  8)   \\ \hline 
 ROC1 (9,2) &  \textbf{8.82e-5} (0.6) &  \textbf{8.82e-5} (1.3,  1) &  \textbf{8.82e-5} (35.6,  1)   \\ \hline 
 ROC2 (10,2) &  2.36  (2.9) &  2.93e-1 (5.1,  4) &  \textbf{7.78e-2} (121.4, 35)   \\ \hline 
 ROC3 (11,4) &  7.24  (1.7) &  6.68  (12.8,  7) &  \textbf{3.63 } (92.2, 19)   \\ \hline 
 ROC4 (9,2) &  \textbf{1.79e-4} (0.6) &  \textbf{1.79e-4} (2.2,  2) &  \textbf{1.79e-4} (26.8,  1)   \\ \hline 
 ROC5 (7,3) &  \textbf{4.57e-14} (0.3) &  \textbf{4.57e-14} (0.0,  0) &  \textbf{4.57e-14} (0.0,  0)   \\ \hline 
 ROC7 (5,2) &  2.20  (1.8) &  4.58e-1 (4.2,  7) &  \textbf{1.33e-4} (8.6, 13)   \\ \hline 
\end{tabular} 
 \end{center} 
 \end{table} 
 \end{center}

\end{document}